\newcommand{\bburl}[1]{\textcolor{blue}{\url{#1}}}
\newcommand{\be}{\begin{equation}}
\newcommand{\ee}{\end{equation}}
\newcommand{\seqnum}[1]{\href{https://urldefense.com/v3/__https://oeis.org/*1*7D*7B*5Crm__;IyUlJQ!!DZ3fjg!5yDnHeGSfGljEqbpuRw46hLMiV_jaL8rn2Yk7DqDl12SpLF1Wq4c5kckywJ55Db23mDqXkwmG3-qhLCpIIaQw7Y$   \underline{#1}}}
\newtheorem{thm}{Theorem}[section]
\newtheorem{claim}[thm]{Claim}
\newtheorem{lem}[thm]{Lemma}
\newtheorem{prop}[thm]{Proposition}
\newtheorem{defi}[thm]{Definition}
\numberwithin{equation}{section}
\newcommand{\Mod}[1]{\ \mathrm{mod}\ #1}
\begin{document}

\title[Linear Recurrences of Order at Most Two in Nontrivial Divisors]{Linear Recurrences of Order at Most Two in Nontrivial Small Divisors and Large Divisors}

\author[Chu]{H\`ung Vi\d{\^e}t Chu}
\email{\textcolor{blue}{\href{mailto:hungchu2@illinois.edu}{hungchu2@illinois.edu}}}
\address{Department of Mathematics, University of Illinois at Urbana-Champaign, Urbana, IL 61820, USA}

\author[Le]{Kevin Huu Le}
\email{\textcolor{blue}{\href{mailto:kevinhle@tamu.edu}{kevinhle@tamu.edu}}}
\address{Mathematics, Texas A\&M University College Station, TX 77843, USA}

\author[Miller]{Steven J. Miller}
\email{\textcolor{blue}{\href{mailto:sjm1@williams.edu}{sjm1@williams.edu}},
\textcolor{blue}{\href{Steven.Miller.MC.96@aya.yale.edu}{Steven.Miller.MC.96@aya.yale.edu}}}
\address{Department of Mathematics and Statistics, Williams College, Williamstown, MA 01267, USA}

\author[Qiu]{Yuan Qiu}
\email{\textcolor{blue}{\href{mailto:yq1@williams.edu}{ yq1@williams.edu}}}
\address{Williams College, Williamstown, MA 01267, USA}

\author[Shen]{Liyang Shen}
\email{\textcolor{blue}{\href{mailto:Liyang.Shen@nyu.edu}{Liyang.Shen@nyu.edu}}}
\address{Courant Institute of Mathematical Sciences, New York University, New York, NY 10012, USA}

\begin{abstract} 
For each positive integer $N$, define 
$$S'_N \ =\ \{1 < d < \sqrt{N}: d|N\}\mbox{ and }L'_N \ =\ \{\sqrt{N} < d < N : d|N\}.$$
Recently, Chentouf characterized all positive integers $N$ such that the set of small divisors $\{d\le \sqrt{N}: d|N\}$  satisfies a linear recurrence of order at most two. We nontrivially extend the result by excluding the trivial divisor $1$ from consideration, which dramatically increases the analysis complexity. Our first result characterizes all positive integers $N$ such that $S'_N$ satisfies a linear recurrence of order at most two. Moreover, our second result characterizes all positive $N$ such that $L'_N$ satisfies a linear recurrence of order at most two, thus extending considerably a recent result that characterizes $N$ with $L'_N$ being in an arithmetic progression. 
\end{abstract}

\subjclass[2020]{11B25}

\keywords{}

\thanks{}

\maketitle

\tableofcontents

\section{Introduction}
For a positive integer $N$, the set of small divisors of $N$ is  
$$S_N\ := \ \{d\,:\, 1\le d\le \sqrt{N}, d\mbox{ divides }N\}.$$
Since the case $N = 1$ is trivial, we assume throughout the paper that $N > 1$. 
In 2018, Iannucci characterized all positive integers $N$ whose $S_N$ forms an arithmetic progression (or AP, for short). Iannucci's key idea was to show that if $S_N$ forms an AP, then the size $|S_N|$ cannot exceed $6$. Observing that the trivial divisor $1$ plays an important role in Iannucci's proofs (see \cite[Lemma 3 and Theorem 4]{Ian}), Chu \cite{Chu2} excluded both $1$ and $\sqrt{N}$ from the definition of $S_N$ to obtain a more general theorem that characterizes all $N$ whose
$$S'_N\ :=\ \{d\,:\, 1 < d< \sqrt{N}, d\mbox{ divides }N\}$$
is in an AP. Interestingly, with the trivial divisor $1$ excluded, \cite[Theorem 1.1]{Chu1} still gives that $|S'_N|\le 5$.  
Recently, Chentouf generalized Iannucci's result from a different perspective by characterizing all $N$ whose $S_N$ satisfies a linear recurrence of order at most two. In particular, for each tuple $(u, v, a, b) \in \mathbb{Z}^4$, there is an integral linear recurrence, denoted by $U(u, v, a, b)$, of order at most two, given by
$$n_i\ =\ \begin{cases}u&\mbox{ if }i = 1,\\ v &\mbox{ if }i = 2,\\ an_{i-1} + bn_{i-2}&\mbox{ if }i\ge 3.\end{cases}$$
Noting that the appearance of the trivial divisor $1$ contributes nontrivially to the proof of \cite[Theorem 3, Lemma 8, Theorem 10]{C}, we generalize Chentouf's result in the same manner as \cite[Theorem 1.1]{Chu1} generalizes \cite[Theorem 4]{Ian}: we characterize all positive integers $N$ whose $S'_N$ satisfies a linear recurrence of order at most two. 

\begin{defi}\normalfont
A positive integer $N$ is said to be small recurrent if $S'_N$ satisfies a linear recurrence of order at most two. When $|S'_N|\le 2$, $N$ is vacuously small recurrent.
\end{defi}

\begin{thm}\label{m1}
Let $p, q, r$ denote prime numbers such that $p < q< r$ and $k$ be some positive integer. 
A positive integer $N > 1$ is small recurrent if and only if $N$ belongs to one of the following forms.
\begin{enumerate}
    \item $N = p^k$ for some $k\ge 1$. In this case, $S'_N = \{p, p^2, \ldots, p^{\lfloor (k-1)/2\rfloor}\}$ satisfies $U(p, p^2, p, 0)$.
    \item $N = p^kq$ or $N = pq^k$ for some $1\le k\le 3$. A restriction for $N = p^3q$ is that either $p < q < p^2$ or $p^3 < q$.
    \item $N = p^kq$ for some $k\ge 4$ and $q > p^k$. In this case, $S'_N = \{p, p^2, p^3, \ldots, p^k\}$ satisfies $U(p, p^2, p, 0)$.
    \item $N = p^k q$ for some $k\ge 4$ and $\sqrt{q} < p < q$. In this case, $S'_N = \{p, q, p^2, pq, \ldots\}$ satisfies $U(p, q, 0, p)$. 
    \item $N = pq^k$ for some $k\ge 4$ and $p < q$. In this case, $S'_N = \{p, q, pq, q^2, \ldots\}$ satisfies $U(p, q, 0, q)$.
    \item   $N = pq^kr$ for some $k\ge 2$, $p < q$, and $r > pq^k$.
        In this case, $S'_N = \{p, q, pq, q^2, \ldots, pq^{k-1}, q^k, pq^k\}$ satisfies $U(p, q, 0, q)$.
     \item $N = p^2 q^2$ for some $p < q < p^2$. In this case, $S'_N = \{p, q, p^2\}$. 
     \item $N = pqr$ for some $p < q < r$. If $r < pq$, then $S'_N = \{p, q, r\}$. If $r > pq$, then $S'_N = \{p, q, pq\}$.
         \item $N = p^3 q^2$ for some $p^{3/2} < q < p^2$. In this case, $S'_N = \{p, q, p^2, pq, p^3\}$ satisfies $U(p, q, 0, p)$.
    \item $N = p^2qr$, where $p < q < p^2 < r < pq$, $(q^2 - p^3)|(pq - r)$, $(q^2-p^3)|(rq-p^4)$, and $r =  pq-\sqrt{(q^2-p^3)(p^2-q)}$. In this case, $S'_N = \{p, q, p^2, r, pq\}$ satisfies $U\left(p, q, \frac{p(pq - r)}{q^2 - p^3}, \frac{rq - p^4}{q^2 - p^3}\right)$.
\end{enumerate}
\end{thm}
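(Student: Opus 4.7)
The plan is to prove both directions of the biconditional. Sufficiency is the more direct half: for each of the ten listed forms, we use the prime inequalities given to write $S'_N$ explicitly in increasing order, and then verify by substitution that the stated initial values and recurrence coefficients reproduce the sequence. The geometric forms (1), (3), (4), (5) fall immediately to the recurrences $U(p, p^2, p, 0)$, $U(p, q, 0, p)$, or $U(p, q, 0, q)$; case (6) additionally requires checking that no divisor is skipped between $q$ and $pq^k$ and that the sequence terminates at $pq^k$; and cases (7), (8), (9), (10) are finite sets of at most five elements where verification is by direct enumeration after substituting the stated parameters.

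For necessity, assume $N > 1$ is small recurrent. When $|S'_N| \le 2$ the recurrence condition is vacuous, and a direct enumeration of $N$ with at most two nontrivial small divisors places such $N$ among the small-parameter instances of cases (1), (2), (7), and (8). We henceforth assume $|S'_N| \ge 3$ so that $S'_N = \{d_1 < \cdots < d_m\}$ satisfies $U(d_1, d_2, a, b)$ for some integers $a, b$. Letting $p$ be the smallest prime factor of $N$, we always have $d_1 = p$ and $d_2 \in \{p^2, q\}$ for $q$ the next smallest prime factor. We then case split on $\omega(N)$, the number of distinct prime divisors. The case $\omega(N) = 1$ yields (1). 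For $\omega(N) = 2$, write $N = p^a q^b$; sub-analysis on which of $p^2, q$ equals $d_2$ and on the relative orders of $p^i$ and $q^j$ for small $i, j$ forces $N$ into one of cases (2)--(5), (7), or (9). The essential tool is that the recurrence pins down $d_3 = ad_2 + bd_1$, so this value must be exactly the next actual divisor of $N$ in $(d_2, \sqrt{N})$, an equality that eliminates most large-exponent pairs. For $\omega(N) \ge 3$, a third prime $r > q$ appears, and a case split on whether $r < pq$ or $r > pq$, combined with the imposed recurrence, restricts $N$ to cases (6), (8), and (10), while simultaneously excluding $\omega(N) \ge 4$.

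The main obstacle is case (10), where $S'_N = \{p, q, p^2, r, pq\}$. The recurrence yields the overdetermined system
\begin{align*}
p^2 &= aq + bp, \\
r &= ap^2 + bq, \\
pq &= ar + bp^2,
\end{align*}
in the unknowns $(a, b)$. Solving the first two gives $a = p(pq-r)/(q^2-p^3)$ and $b = (rq-p^4)/(q^2-p^3)$; substituting into the third yields the quadratic relation $(pq-r)^2 = (p^2-q)(q^2-p^3)$, and the root compatible with $p^2 < r < pq$ is $r = pq - \sqrt{(q^2-p^3)(p^2-q)}$. The integrality of $a$ and $b$ then produces the two divisibility conditions $(q^2-p^3)\mid (pq-r)$ and $(q^2-p^3)\mid(rq-p^4)$. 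Verifying that these three conditions are simultaneously necessary and sufficient, and confirming that no additional divisor lies in $(pq, \sqrt{N})$ to invalidate the recurrence, is the most delicate technical point. A milder version of the same difficulty appears in case (9), where the bound $p^{3/2} < q < p^2$ must be forced by requiring $U(p,q,0,p)$ to produce exactly the stated five divisors with no further prime factor of $N$ lying below $\sqrt{N}$.
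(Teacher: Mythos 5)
There is a genuine gap in the necessity direction. You identify the linear system in case (10) as ``the main obstacle,'' but solving $p^2 = aq+bp$, $r = ap^2+bq$, $pq = ar+bp^2$ for $(a,b)$ and extracting the divisibility conditions is the routine part. The hard part, which your plan does not address, is proving that when the recurrence does not degenerate to one of the geometric patterns, the sequence $S'_N$ must in fact \emph{terminate after very few terms} --- e.g., that a small recurrent $N$ whose first four nontrivial divisors are $p < q < p^2 < r$ must have $|S'_N| = 5$ exactly (so $N = p^2qr$, not $p^3qr$, $p^2q^2r$, or something with a longer divisor chain), and that no small recurrent $N$ exists whose first four divisors are $p<q<r<p^2$, or $p<q<r<pq$, or four distinct primes $p<q<r<s$ (the last of these is what excludes $\omega(N)\ge 4$). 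The paper spends most of its effort here: it first establishes coprimality lemmas for the recurrent sequence ($\gcd(a,b)=1$, $\gcd(d_i,d_{i+1})=1$, periodic divisibility of the $d_i$ by $p$ and $q$), and then runs a ``multiply by $p$'' argument --- if $d_j$ is the largest element of $S'_N$ divisible by $p$ and $d_{j-1}, d_{j-2}$ are coprime to it, then $p\,d_{j-2}$ is another divisor below $\sqrt N$, and showing it cannot equal any admissible $d_m$ forces either a contradiction or a strictly larger lower bound on $|S_N|$. Combined with $\tau(N)$ arithmetic this caps $|S'_N|$ at $5$ or $7$ in each configuration. Your proposal's stated mechanism (``$d_3 = ad_2+bd_1$ must be the next actual divisor, an equality that eliminates most large-exponent pairs'') only constrains one step of the recurrence and cannot rule out long recurrent chains or a fourth prime factor; without the coprimality lemmas and the length-bounding propositions, the case $\omega(N)\ge 3$ of your argument does not close.

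A secondary inaccuracy: in the two-prime case you suggest the recurrence ``eliminates most large-exponent pairs,'' but cases (4) and (5) permit arbitrarily large $k$; what is actually being bounded there is the exponent of $q$ (the paper shows $b=p$ forces every later term to be divisible by $p$, so $q^2 \notin S'_N$, whence $N = p^\ell q$, $p^2q^2$, or $p^3q^2$). The sufficiency half of your plan and the algebra for cases (9) and (10) are fine and match the paper.
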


Next, consider the set of large divisors of $N$
$$L_N \ := \ \{d\,:\, d \ge \sqrt{N}, d\mbox{ divides }N\}\mbox{ and }L'_N \ :=\ \{d\,:\, \sqrt{N}< d< N, d\mbox{ divides } N\}.$$
The second result of this paper is the characterization of all positive integers $N$ whose $L'_N$ satisfies a linear recurrence of order at most two. This considerably extends \cite[Theorem 1]{Chu1}.

\begin{defi}\normalfont
A positive integer $N$ is said to be large recurrent if $L'_N$ satisfies a linear recurrence of order at most two. When $|L'_N|\le 2$, $N$ is vacuously large recurrent.
\end{defi}
\begin{thm}\label{m2}
Let $p, q, r$ denote prime numbers such that $p < q< r$ and $k$ be some positive integer. 
A positive integer $N > 1$ is large recurrent if and only if $N$ belongs to one of the following forms.
\begin{enumerate}
     \item $N=p^k$ for some $k\ge 1$. In this case, $L'_N=\{p^{\lceil(k-1)/2\rceil+1}, p^{\lceil (k-1)/2\rceil+2},\ldots,p^{k-1}\}$ satisfies $U(p^{\lceil (k-1)/2\rceil+1},p^{\lceil (k-1)/2\rceil +2},p,0)$.
    \item $N=p^kq$ for some $k\ge 1$ and $q>p^k$. In this case, $L'_N=\{q,pq,p^2q,\ldots,p^{k-1}q\}$ satisfies $U(q,pq,p,0)$.
    \item $N=p^kq$ for some $k\ge 2$ and $p^{k-1} < q < p^k$. Then $$L'_N \ =\ \{p^k, pq, p^2q, \ldots, p^{k-1}q\}$$
    satisfies $U(p^k, pq, p, 0)$.
     \item $N = p^kq$ some for $k\ge 3$ and $p<q<p^2$.  In this case, 
    $$L'_N \ =\ \begin{cases}
    \{p^{{k/2}+1},p^{{k/2}}q,p^{{k/2}+2},\ldots,p^{k-1}q\} &\mbox{ if }2|k,\\
    \{p^{(k-1)/2}q,p^{{(k+3)/2}},p^{(k+1)/2}q,\ldots,p^{k-1}q\} &\mbox{ if }2\nmid k.
    \end{cases}$$
    Observe that $L'_N$ satisfies $U(p^{{k/2}+1},p^{{k/2}}q,0,p)$ and $U(p^{(k-1)/2}q,p^{{(k+3)/2}},0,p)$ for even and odd $k$, respectively.
    \item $N=p^4q$ with $p^2 < q < p^3$, $(p^5-q^2)|(p^2-q)$, and $(p^5-q^2)|(p^3-q)$. In this case, 
    $L'_N \ =\ \{pq, p^4, p^2q, p^3q\}$.
     \item $N = p^3q^2$ for $p < q < p^2$. In this case, $L'_N = \{q^2, p^2q, pq^2, p^3q, p^2q^2\}$ satisfies $U(q^2, p^2q, 0, p)$.
     \item $N = p^2 q^2$ for some $p < q$. If $p < q < p^2$, then $L'_N = \{q^2, p^2q, pq^2\}$. 
    \item $N = pq^k$ for some $k\ge 2$ and $p < q$.  In this case, 
    $$L'_N \ =\ \begin{cases}
   \{pq^{k\over2},q^{{k\over2}+1},\ldots,q^k\} &\mbox{ if }2|k,\\
    \{q^{k+1\over2},pq^{k+1\over2},\ldots,q^k\} &\mbox{ if }2\nmid k.
    \end{cases}$$
    Observe that $L'_N$ satisfies $U(pq^{k/2},q^{{k/2}+1},0,q)$ and $U(q^{(k+1)/2},pq^{(k+1)/2},0,q)$ for even and odd $k$, respectively.
     \item  $N=pq^kr$ for some $k\ge 1$ and $p<q<pq^k<r$. In this case,  $L'_N=\{r,pr,qr,pqr,q^2r,\ldots,q^kr\}$ satisfies $U(r,pr,0,q)$.
\end{enumerate}
\end{thm}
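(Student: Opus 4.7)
\textbf{Plan for Theorem \ref{m2}.} The proof proceeds in two directions. For sufficiency, in each of the nine listed forms of $N$ I would list the divisors of $N$ explicitly, sort them using the given inequalities among $p,q,r$, read off $L'_N$ as the divisors lying strictly between $\sqrt{N}$ and $N$, and verify the stated recurrence $U(u,v,a,b)$ by direct substitution. Cases (1), (4), and (8) split according to the parity of $k$, and case (5) is the most intricate: the coefficients $A,B$ in $m_n=Am_{n-1}+Bm_{n-2}$ on the four-term set $L'_N=\{pq,p^4,p^2q,p^3q\}$ are obtained by solving a $2\times 2$ linear system whose integrality reduces exactly to the stated divisibility conditions $(p^5-q^2)\mid(p^2-q)$ and $(p^5-q^2)\mid(p^3-q)$.

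For necessity, assume $N$ is large recurrent with $|L'_N|\ge 3$ and split on $\omega(N)$, the number of distinct prime divisors of $N$. The correspondence $d\mapsto N/d$ is an order-reversing bijection $S'_N\to L'_N$, giving $|L'_N|=|S'_N|$. The case $\omega(N)=1$ is immediate: $N=p^k$ makes $L'_N$ a geometric tail of powers of $p$, yielding case (1). For $\omega(N)\ge 4$, $\tau(N)\ge 2^{\omega(N)}\ge 16$ forces $|L'_N|\ge 7$; the divisor lattice is then too rich to be threaded by a two-term recurrence, which I would rule out by examining short consecutive segments of $L'_N$ involving three distinct primes and showing that the monomial ratio constraints become inconsistent. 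For $\omega(N)=3$, $N=p^\alpha q^\beta r^\gamma$, the key observation is that when the largest prime $r$ dominates (as in $r>pq^k$), $r$ becomes the smallest element of $L'_N$, and the recurrence $m_n=q\,m_{n-2}$ propagates cleanly through $\{r,pr,qr,pqr,q^2r,\ldots,q^kr\}$, giving case (9); all other three-prime configurations produce mixed products at incompatible positions that no two-term recurrence can reconcile.

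The main work, and the main obstacle, is $\omega(N)=2$ with $N=p^\alpha q^\beta$ and $p<q$. I would stratify by the position of $q$ among the powers of $p$: the regimes $q>p^\alpha$, $p^{\alpha-1}<q<p^\alpha$, $p<q<p^2$, and the sporadic small case $p^2<q<p^3$ with $\alpha=4$ each produce a distinct interleaving of the divisors $p^iq^j$ in $L'_N$. Inside each regime, writing consecutive triples as monomials $p^iq^j$ and imposing the recurrence forces rigid compatibility of exponents; the outcome is that $L'_N$ must be either a geometric progression in $p$ (cases (2), (3)), an alternation of two interleaved geometric subsequences with common ratio $p$ or $q$ (cases (4), (6), (7), (8)), or the sporadic four-term configuration of case (5) that survives only under the divisibility conditions above. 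The delicate step is identifying which divisor lies just above $\sqrt{N}$ in each regime, since this depends on further inequalities (for instance $q^2$ versus $p^3$ when $N=p^3q^2$) and it determines the first two terms of $L'_N$, hence the entire recurrence. This bookkeeping is where the case dichotomy truly originates.
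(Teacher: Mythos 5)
There is a genuine gap: your plan names the order-reversing bijection $d\mapsto N/d$ but only uses it to conclude $|L'_N|=|S'_N|$, and then proposes to impose the recurrence directly on $L'_N$, stratified by $\omega(N)$ and by the position of $q$ among the powers of $p$. You yourself identify the resulting obstacle --- determining which divisor sits just above $\sqrt{N}$ in each regime --- as ``the delicate step,'' but this is circular: you cannot sort the top of the divisor list without already knowing the factorization type you are trying to derive. The paper's proof avoids this entirely via its key lemma (Lemma \ref{p222}): writing $d'_i = N/d_i$, the linear recurrence $d'_i = a d'_{i+1} + b d'_{i+2}$ on $L'_N$ transfers to the relation $a d_{i+2} + b d_{i+1} = d_{i+1}d_{i+2}/d_i$ on consecutive elements of $S'_N$, whose initial segment is always tractable ($d_2 = p$, $d_3 \in \{p^2, q\}$, $d_4 \in \{p^3, q, p^2, pq\}$, \ldots). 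All of the structure --- including the exclusion of extra primes and the sporadic case (5) --- then falls out of a short forward analysis of $d_2,\dots,d_6$. Without this transfer (or an equivalent device), your necessity argument has no engine.

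Relatedly, the claims that carry the necessity direction in your plan are asserted rather than proved: for $\omega(N)\ge 4$ you say the lattice is ``too rich to be threaded by a two-term recurrence,'' and for $\omega(N)=3$ that non-listed configurations ``produce mixed products at incompatible positions.'' These are precisely the statements that require the concrete contradictions the paper extracts (e.g., in the branch $d_4 = pq$, $d_5 = r$, showing $pq \mid d_6$ forces $d_6 \in \{pq^2, pqr\}$ and each choice places a smaller divisor of $N$ ahead of $d_6$). Finally, your threshold $|L'_N|\ge 3$ blurs an important boundary: with only three terms the recurrence imposes a single equation, so the transferred relation cannot be iterated, and the paper instead handles $|L'_N|\le 3$ by enumerating all $N$ with $\tau(N)\le 9$ and checking solvability of that one equation (this is how the exclusions in items (7) and (8) of the theorem, e.g.\ $N=p^2q^2$ with $q>p^2$, are obtained). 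Your plan does not address these small cases separately, so the ``only if'' direction is incomplete there as well.
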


The paper is structured as follows: Section \ref{pre} studies the case when $N$ has a small number of divisors and establish some preliminary results; Section \ref{small} characterizes small recurrent numbers, while Section \ref{large} characterizes large recurrent numbers.

\section{Preliminaries}\label{pre}
For each $N\in\mathbb{N}$ with the prime factorization $\prod_{i=1}^\ell p_i^{a_i}$, the divisor-counting function is 
\begin{equation}\label{e1}\tau(N)\ :=\ \sum_{d|N} 1\ =\ \prod_{i=1}^\ell (a_i+1).\end{equation}
It is easy to verify that for $N > 1$,
\begin{equation}\label{e5}\tau(N) \ :=\ \begin{cases}2|S'_N| + 3\ =\ 2|L'_N| + 3 &\mbox{ if }N\mbox{ is a square},\\ 2|S'_N| + 2\ =\ 2|L'_N| + 2 &\mbox{ otherwise}.\end{cases}\end{equation}
Using \eqref{e1}, we can characterize all $N$ with $\tau(N)\le 9$; equivalently, $|S'_N|, |L'_N|\le 3$.
\begin{enumerate}
    \item[(i)] If $\tau(N) = 2$ or $3$,  \eqref{e1} gives that $N = p$ or $p^2$ for some prime $p$,
    \item[(ii)] If $\tau(N) = 4$ or $5$,  $N = pq, p^3, p^4$ for some primes $p < q$,
    \item[(iii)] If $\tau(N) = 6$ or $7$, $N = p^5, pq^2, p^2q, p^6$ for some primes $p < q$,
    \item[(iv)] If $\tau(N) = 8$ or $9$, $N = pqr, pq^3, p^3q, p^7, p^2q^2, p^8$ for some primes $p < q < r$.
\end{enumerate}

\subsection{Regarding $S'_N$}

\begin{prop}\label{p2}
If $|S'_N|\ge 2$, then $N$ cannot have two (not necessarily distinct) prime factors $p_1$ and $p_2$ at least $\sqrt{N}$
\end{prop}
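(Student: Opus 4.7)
The plan is to argue by contradiction. Suppose $N$ has two (not necessarily distinct) prime factors $p_1, p_2 \ge \sqrt{N}$ both dividing $N$, and analyze the two possibilities: $p_1 \neq p_2$ and $p_1 = p_2$.

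In the distinct case, since $p_1$ and $p_2$ are distinct primes dividing $N$, their product also divides $N$, so $p_1 p_2 \le N$. Combined with $p_1, p_2 \ge \sqrt{N}$, which gives $p_1 p_2 \ge N$, we would force $p_1 p_2 = N$. But if (without loss of generality) $p_1 < p_2$, then $p_1 < \sqrt{N}$, contradicting $p_1 \ge \sqrt{N}$. So the distinct case is ruled out immediately.

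In the equal case $p_1 = p_2 = p$, the hypothesis means $p^2 \mid N$ and $p \ge \sqrt{N}$, so $p^2 \le N$ and $p^2 \ge N$ together give $N = p^2$. Then the complete list of divisors of $N$ is $\{1, p, p^2\}$, so $S'_N = \{d : 1 < d < p,\; d \mid p^2\} = \emptyset$, which gives $|S'_N| = 0$, contradicting the assumption $|S'_N| \ge 2$.

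I do not anticipate any real obstacle here; the argument is a short arithmetic contradiction based only on the multiplicativity of divisibility and the definition of $S'_N$. The only subtlety is to read ``not necessarily distinct'' correctly so as to cover the $N = p^2$ situation explicitly, since that is precisely the boundary case where $p = \sqrt{N}$ is a prime factor of multiplicity two.
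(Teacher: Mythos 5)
Your proof is correct and follows essentially the same route as the paper: both use $p_1p_2 \ge N$ together with $p_1p_2 \mid N$ to force $N = p_1p_2$, and then observe this is incompatible with $|S'_N| \ge 2$. Your explicit case split (and the direct size contradiction in the distinct case) is just a more detailed writing-out of the paper's one-line argument.
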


\begin{proof}
Suppose that $N$ has two prime factors $p_1$ and $p_2$ at least $\sqrt{N}$. Then $p_1p_2\ge N$ and $p_1p_2$ divides $N$; hence, $N = p_1p_2$, which contradicts that $|S'_N|\ge 2$.
\end{proof}

\begin{prop}\label{p1}
If all elements of $S'_N$ are divisible by some prime $p$ and $|S'_N|\ge 4$, then either $N = p^k$ or $N = p^kq$ for some $k\ge 1$ and some prime $q > p^k$. 
\end{prop}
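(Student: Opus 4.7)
The plan is to combine the hypothesis with Proposition~\ref{p2} to pin down the prime factorization of $N$ to a very narrow list. First, I would observe that if $q \neq p$ is any prime factor of $N$, then $q \notin S'_N$: the set $S'_N$ consists of divisors divisible by $p$, whereas a prime $q \neq p$ is not. Since $q > 1$ and $q \mid N$, the only way to have $q \notin S'_N$ is $q \ge \sqrt{N}$. The boundary case $q = \sqrt{N}$ would give $N = q^2$, making $p = q$, which is excluded; hence every prime factor of $N$ other than $p$ lies strictly above $\sqrt{N}$.

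Next I would invoke Proposition~\ref{p2}, which applies because $|S'_N| \ge 4 \ge 2$. It forbids $N$ from having two (not necessarily distinct) prime factors at least $\sqrt{N}$. Combined with the previous step, this leaves exactly two possibilities: either no prime other than $p$ appears at all, so $N = p^k$; or there is a unique prime $q \neq p$ dividing $N$, and it does so with multiplicity one, giving $N = p^k q$. In the second case, Proposition~\ref{p2} also rules out $p \ge \sqrt{N}$: if $p \ge \sqrt{N}$ and $k \ge 2$, taking $p_1 = p_2 = p$ contradicts the proposition; if $p \ge \sqrt{N}$ and $k = 1$, then $N = pq$ forces $|S'_N| \le 1$, contradicting $|S'_N| \ge 4$. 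Hence $p < \sqrt{N}$.

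To finish the second case, I must show $q > p^k$. Since $q > \sqrt{N} = \sqrt{p^k q}$, squaring yields $q^2 > p^k q$, so $q > p^k$; in particular $q > p$, matching the ordering in the statement. The only real obstacle is bookkeeping the boundary possibilities (a prime equal to $\sqrt{N}$, or $p$ itself serving as the large prime, or $q < p$), but each collapses immediately once the hypothesis $|S'_N| \ge 4$ is combined with Proposition~\ref{p2}, so no deeper argument is needed beyond the two applications above.
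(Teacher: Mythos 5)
Your proof is correct and follows essentially the same route as the paper's: observe that any prime $q \neq p$ dividing $N$ cannot lie in $S'_N$ and hence must be at least $\sqrt{N}$, then apply Proposition~\ref{p2} to force $N = p^k$ or $N = p^k q$ with $q$ appearing to the first power, and deduce $q > p^k$ from $q \ge \sqrt{p^k q}$. Your extra bookkeeping of the boundary cases is harmless and slightly more careful than the paper's one-line treatment.
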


\begin{proof}
If all divisors (except $1$) of $N$ are divisible by $p$, then $n = p^k$ for some $k\ge 1$. Assume that $N$ has a prime factor $q\neq p$. Then $q\ge \sqrt{N}$. Proposition \ref{p2} implies that $N$ cannot have another prime factor at least $\sqrt{N}$. Hence, $N = p^kq$ for some $k\ge 1$ and $q > p^k$. 
\end{proof}

Let $p < q < r < s$ be distinct prime numbers. Write $S'_N = \{d_2, d_3, d_4, d_5, \ldots\}$. (We start with $d_2$ since the smallest divisor of $N$ is usually denoted by $d_1 = 1$, which is excluded from $S'_N$.)

\begin{lem}\label{l1} Suppose that the first $4$ numbers in $S'_N$ are $p < q < p^2 < r$. If $N$ is small recurrent with $U(p, q, a, b)$, the following hold:
\begin{enumerate}
    \item[i)] $\gcd(a, b) = 1$.
    \item[ii)] if $d_{2i}\in S'_N$, then $p|d_{2i}$; however, if $d_{2i-1}\in S'_N$, then $p\nmid d_{2i-1}$.
    \item[iii)] for $d_i, d_{2i-1}\in S'_N$, we have $\gcd(b, d_i) =  \gcd(a, d_{2i-1}) = 1$.
    \item[iv)] for $d_i, d_{i+1}\in S'_N$, we have $\gcd(d_i, d_{i+1}) = 1$.
    \item[v)] for $d_{2i-1}, d_{2i+1}\in S'_N$, we have $\gcd(d_{2i-1}, d_{2i+1})= 1$.
\end{enumerate}
\end{lem}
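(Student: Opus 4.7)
The plan is to establish the five claims in order, working from the two defining identities
\[ p^2 \ =\ aq + bp \quad \text{and} \quad r \ =\ ap^2 + bq, \]
which come from applying the recurrence $d_{j+2} = a d_{j+1} + b d_j$ to the given initial data $d_2 = p$, $d_3 = q$, $d_4 = p^2$, $d_5 = r$. Throughout, I use that $p, q, r$ are pairwise distinct primes.

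For (i), I would assume for contradiction that $c := \gcd(a,b) > 1$; then $c$ divides $p^2$ by the first identity, so $p \mid c$, and consequently $p$ divides both $a$ and $b$. Substituting into the second identity then forces $p \mid r$, which is impossible. For (ii), reducing the first identity modulo $p$ gives $aq \equiv 0 \pmod{p}$, hence $p \mid a$, while reducing the second gives $bq \equiv r \not\equiv 0 \pmod p$, hence $p \nmid b$. A straightforward induction on $d_{j+2} = a d_{j+1} + b d_j$ then propagates the alternating pattern $p \mid d_{2i}$ and $p \nmid d_{2i-1}$.

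For the first half of (iii), namely $\gcd(b, d_i) = 1$, I induct forward on $i$. The cases $i = 2, 3$ follow from $p \nmid b$ just established and from reducing the first identity modulo $q$ to get $b \equiv p \not\equiv 0 \pmod q$. In the step, if a prime $\pi$ divides both $b$ and $d_{i+1} = a d_i + b d_{i-1}$, then $\pi \mid a d_i$; the case $\pi \mid d_i$ contradicts the inductive hypothesis, and $\pi \mid a$ contradicts (i). For the second half, $\gcd(a, d_{2i-1}) = 1$, the flow is \emph{backward} through the recurrence. The base case $i = 2$ asks that $q \nmid a$, which follows by reducing the second identity modulo $q$. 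For $i \geq 3$, if $\pi \mid a$ and $\pi \mid d_{2i-1}$, then from $d_{2i-1} = a d_{2i-2} + b d_{2i-3}$ and $\pi \nmid b$ (guaranteed by $\gcd(a,b) = 1$), I conclude $\pi \mid d_{2i-3}$. Iterating descends to $\pi \mid d_3 = q$, so $\pi = q$, forcing $q \mid a$ and contradicting the base case.

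Finally, (iv) and (v) are quick consequences of the earlier parts. For (iv), if a prime $\pi$ divides both $d_i$ and $d_{i+1}$, then the first half of (iii) applied to $d_{i+1}$ gives $\pi \nmid b$, so from $d_{i+1} = a d_i + b d_{i-1}$ one extracts $\pi \mid d_{i-1}$; iterating the descent forces $\pi \mid p$ and $\pi \mid q$, a contradiction. For (v), if $\pi \mid \gcd(d_{2i-1}, d_{2i+1})$, the recurrence gives $\pi \mid a d_{2i}$, and both alternatives $\pi \mid a$ and $\pi \mid d_{2i}$ are ruled out by (iii) and (iv) respectively. The main technical obstacle is the backward induction in the second half of (iii): it depends crucially on being able to ``invert'' the recurrence modulo $\pi$, which is exactly what $\gcd(a,b) = 1$ enables, and it needs $d_3 = q$ as the indispensable terminal anchor of the descent.
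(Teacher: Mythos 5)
Your proof is correct and follows essentially the same route as the paper's: extract $p^2 = aq+bp$ and $r = ap^2+bq$, deduce $p\mid a$, $p\nmid b$, and run inductions/descents along the recurrence anchored at $d_2=p$ and $d_3=q$. The only differences are cosmetic (e.g., you rule out $\gcd(a,b)>1$ by reducing mod $p$ rather than via $\gcd(a,b)\mid r$, and you obtain the base cases of (iii) by direct modular reduction), so nothing further is needed.
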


\begin{proof}
i) Since $r = ap^2 + bq$, we have $\gcd(a, b)|r$. Observe that $\gcd(a, b) = r$ contradicts $p^2 = aq + bp$. Hence, $\gcd(a, b) = 1$.

ii) We prove by induction. The claim is true for $i\le 2$. Assume that the claim holds for $i = j\ge 2$. We show that it holds for $i = j+1$. Since $p^2 = aq + bp$, we know that $p | a$. By item i), $p \nmid b$. Write 
\begin{align*}
d_{2(j+1)} \ =\ ad_{2(j+1)-1} + bd_{2(j+1)-2}&\ =\ ad_{2j + 1} + bd_{2j}\\
&\ =\ (a^2+b)d_{2j} + abd_{2j-1}.
\end{align*}
By the inductive hypothesis, $p | d_{2j}$. Since $p | a$, we obtain $p | d_{2(j+1)}$. Furthermore, write 
$$
d_{2(j+1)-1}\ =\ ad_{2(j+1)-2} + bd_{2(j+1)-3}\ =\ ad_{2j} + bd_{2j-1}.
$$
Since $p\nmid bd_{2j-1}$ by the inductive hypothesis and $p | ad_{2j}$, we obtain $p\nmid d_{2(j+1)-1}$. This completes our proof. 

iii) Suppose that $k = \gcd(b, d_i) > 1$ for some $i$. If $i = 2$, then $p|b$, which contradicts that $p | a$ and $\gcd(a, b) = 1$. If $i = 3$, then $q | b$, which contradicts $p^2 = aq + bp$. If $i\ge 4$, write $d_i = ad_{i-1} + bd_{i-2}$. Since $k|bd_{i-2}$ and $\gcd(a, b) = 1$, we get $k | d_{i-1}$ and so, $k | \gcd(b, d_{i-1})$. By induction, we obtain $k | \gcd(b, d_{3})$, which has been shown to be impossible.

Next, suppose that $k = \gcd(a, d_{2i-1}) > 1$ for some $i$. If $i = 2$, then $q | a$, contradicting $r = ap^2 + bq$. If $i\ge 3$, 
$$k\ |\ d_{2i-1}, \mbox{ }d_{2i-1} \ =\ ad_{2i-2} + bd_{2i-3}, \mbox{ and }\gcd(a, b) \ =\ 1\ \Longrightarrow\ k \ |\ d_{2i-3}.$$
By induction, $k | d_3$; that is, $\gcd(a, d_3) > 1$, which has been shown to be impossible. 

iv) The claim holds for $i\le 4$. For $i\ge 5$, using the recurrence $d_{i+1} = ad_i + bd_{i-1}$, we have 
$$\gcd(d_i, d_{i+1})\ =\ \gcd(d_i, bd_{i-1})\ \stackrel{\mbox{iii)}}{=}\ \gcd(d_i, d_{i-1}).$$
By induction, we obtain $\gcd(d_i, d_{i+1}) = 1$. 

v) The claim holds for $i \le 2$. For $i\ge 3$, we have
$$\gcd(d_{2i-1}, d_{2i+1})\ =\ \gcd(d_{2i-1}, ad_{2i})\ \stackrel{\mbox{iii)}}{=}\ \gcd(d_{2i-1}, d_{2i}) \ \stackrel{\mbox{iv)}}{=}\ 1.$$
This completes our proof. 
\end{proof}

\begin{lem}\label{l3}
Suppose that the first $4$ numbers in $S'_N$ are $p < q < r < p^2$. If $N$ is small recurrent with U(p, q, a, b), the following hold:
\begin{enumerate}
    \item[i)] $\gcd(a, b) = 1$ and $p\nmid a$.
    \item [ii)] For all $d_i\in S'_N$, $\gcd(b, d_i) = 1$.
    \item [iii)] For all $d_i, d_{i+1}\in S'_N$, $\gcd(d_i, d_{i+1}) = 1$.
    \item [iv)] Let $d_i\in S'_N$. Then $p|d_i$ if and only if $i\equiv 2\Mod 3$.
\end{enumerate}
\end{lem}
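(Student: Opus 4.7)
The setup gives $d_2=p$, $d_3=q$, $d_4=r$, $d_5=p^2$, so applying the recurrence at indices $4$ and $5$ yields the two relations
\[ r \ =\ aq + bp \qquad\text{and}\qquad p^2 \ =\ ar + bq. \]
First I observe that $r$ must itself be prime: any composite divisor of $N$ has smallest prime factor at least $p$, hence size at least $p^2 > r$. So $p, q, r$ are three distinct primes. For (i), $\gcd(a,b)$ divides both $r$ and $p^2$, so it divides $\gcd(r, p^2) = 1$. If $p \mid a$, the second equation forces $p \mid bq$, and since $p \neq q$ we get $p \mid b$, contradicting $\gcd(a,b)=1$; hence $p \nmid a$.

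For (ii) I argue by strong induction. The base cases $i=2,3,4$ are handled directly: $p \mid b$ combined with $p \nmid a$ and $p^2 = ar+bq$ would force $p \mid r$; $q \mid b$ would make $q \mid r$ via $r = aq+bp$; and $r \mid b$ would make $r \mid p^2$ via the second equation. All three contradict the distinctness of $p,q,r$. The inductive step mirrors that of Lemma \ref{l1}: a common prime divisor $k$ of $b$ and $d_i$ satisfies $k \mid a d_{i-1}$ by the recurrence, and $\gcd(a,b)=1$ then forces $k \mid d_{i-1}$, breaking the inductive hypothesis. Item (iii) follows at once, since $\gcd(p,q)=\gcd(q,r)=\gcd(r,p^2)=1$ and, for $i \geq 5$,
\[ \gcd(d_i, d_{i+1}) \ =\ \gcd(d_i, bd_{i-1}) \ =\ \gcd(d_i, d_{i-1}) \]
by (ii), closing the induction.

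Item (iv) is the main obstacle, and its key is the congruence $a^2 + b \equiv 0 \pmod p$: substituting $r = aq + bp$ into $p^2 = ar + bq$ gives $p(p - ab) = (a^2 + b)q$, and $\gcd(p,q) = 1$ forces $p \mid a^2+b$. Reducing modulo $p$, the initial terms are $d_2 \equiv 0$, $d_3 \equiv q \not\equiv 0$, $d_4 \equiv aq \not\equiv 0$ (using $p\nmid a$ and $p\neq q$), and $d_5 \equiv 0$. I then prove by induction on $k \geq 0$ that $d_{3k+2} \equiv 0 \pmod p$ while $d_{3k+3}$ and $d_{3k+4}$ are nonzero mod $p$. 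The inductive step: given $d_{3k+2} \equiv 0$ and $d_{3k+3} \equiv \gamma \not\equiv 0 \pmod p$, the recurrence yields
\[ d_{3k+4} \ \equiv\ a\gamma,\qquad d_{3k+5} \ \equiv\ (a^2+b)\gamma \ \equiv\ 0, \qquad d_{3k+6} \ \equiv\ ab\gamma \pmod p, \]
and $p\nmid a$, $p\nmid b$ ensure $d_{3k+4}$ and $d_{3k+6}$ remain nonzero mod $p$. This establishes the promised $3$-periodic pattern and completes (iv).
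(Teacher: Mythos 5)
Your proof is correct and follows essentially the same route as the paper's: derive the relations $r=aq+bp$ and $p^2=ar+bq$, deduce $\gcd(a,b)=1$ and $p\nmid a$, run the standard $\gcd$ inductions for (ii) and (iii), and obtain (iv) from $p\mid a^2+b$ via the resulting $3$-periodic divisibility pattern. The only (harmless) addition is your explicit check that $r$ is prime, which the paper assumes through its notational convention that $p<q<r<s$ denote primes.
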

\begin{proof}
i) We have $\gcd(a, b)$ divides $r$ because $r = aq + bp$. Furthermore, $\gcd(a, b)$ divides $p$ because $p^2 = ar + bq$. Hence, $\gcd(a, b) = 1$.

Since $r= aq + bp$ and $r$ is a prime, $p\nmid a$.

ii) Suppose that $k = \gcd(b, d_i) > 1$ for some $i$. If $i = 2$, then $k = p$ and $p | b$. It follows from $p^2 = ar + bq$ that $p|a$, which contradicts that $\gcd(a, b) = 1$. If $i = 3$, then $k = q$ and $q | b$. It follows from $r = aq + bp$ that $q | r$, a contradiction. Hence, $i\ge 4$. Write 
$$1 \ <\ \gcd(b, d_i)\ =\ \gcd(b, ad_{i-1} + bd_{i-2})\ =\ \gcd(b, d_{i-1}).$$
By induction, we obtain $\gcd(b, d_3)>1$, which has been shown to be impossible. 

iii) The claim holds for $i\le 4$. Let $d_i, d_{i+1}\in S'_N$ and $i\ge 5$. Then
$$\gcd(d_i, d_{i+1}) \ =\ \gcd(d_i, ad_i + bd_{i-1})\ =\ \gcd(d_i, bd_{i-1})\ \stackrel{\mbox{ii)}}{=}\ \gcd(d_i, d_{i-1}).$$
By induction, $\gcd(d_i, d_{i+1}) = 1$. 

iv) The claim holds for $i\le 4$. Suppose that the claim holds for all $i \le j$ for some $j\ge 5$. We show that it also holds for $i = j+1$ under the assumption that $d_{j+1}\in S'_N$. We have
$$p^2 \ =\ ar + bq \ =\ a(aq+bp) + bq \ =\ (a^2+b)q + abp.$$
Hence, $p | (a^2 + b)$. Write
$$d_{j+1} \ =\ ad_j + bd_{j-1} \ =\ a(ad_{j-1} + bd_{j-2}) + bd_{j-1} \ =\ (a^2+b)d_{j-1} + abd_{j-2}.$$
Therefore, $p|d_{j+1}$ if and only if $p|d_{j-2}$. By the inductive hypothesis, we know that $p|d_{j+1}$ if and only if $j+1\equiv 2\Mod 3$.
\end{proof}

The proofs of the next two lemmas are similar to those of Lemmas \ref{l1} and \ref{l3}. Thus, we move their proofs to the Appendix.

\begin{lem}\label{l4}
Suppose that the first $4$ numbers in $S'_N$ are $p < q < r < pq$. If $N$ is small recurrent with $U(p, q, a, b)$, the following hold:
\begin{enumerate}
    \item [i)] $\gcd(a, b) = 1$ and $p\nmid a$.
    \item [ii)] For all $d_i\in S'_N$, $\gcd(b, d_i) = 1$.
    \item [iii)] For all $d_i, d_{i+1}\in S'_N$, $\gcd(d_i, d_{i+1}) = 1$.
    \item [iv)] Let $d_i\in S'_N$. Then $p|d_i$ if and only if $i\equiv 2\Mod 3$.
    \item [v)] Let $d_i\in S'_N$. Then $q|d_i$ if and only if $i$ is odd. 
\end{enumerate}
\end{lem}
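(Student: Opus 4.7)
The proof of Lemma~\ref{l4} will closely mirror that of Lemma~\ref{l3}, with the divisor $d_4 = r$ now satisfying $r < pq$ rather than $r > p^2$, which changes the base cases but not the overall strategy. The two recurrence-defining equations I will repeatedly exploit are
\begin{equation*}
r \;=\; aq + bp \qquad\text{and}\qquad pq \;=\; ar + bq \;=\; (a^2+b)q + abp,
\end{equation*}
where the second form is obtained by substituting $r = aq + bp$ into $pq = ar + bq$.

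For part (i), any common prime divisor of $a$ and $b$ must divide $r = aq + bp$, and since $r$ is prime it must equal $r$; but then $r \mid (ar + bq) = pq$, contradicting $r > q > p$. For $p \nmid a$, note that $p \mid a$ would force $p \mid r$ by $r = aq + bp$, again impossible. For part (ii), I handle the base cases $d_2 = p, d_3 = q$ directly: $p \mid b$ together with $pq = ar + bq$ forces $p \mid ar$, hence $p \mid a$, contradicting (i); and $q \mid b$ forces $q \mid r$ via $r = aq + bp$. For $i \ge 4$, the standard identity $\gcd(b, d_i) = \gcd(b, ad_{i-1} + bd_{i-2}) = \gcd(b, d_{i-1})$ (using $\gcd(a,b) = 1$) gives an immediate induction back to $d_3$. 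Part (iii) is now routine: $\gcd(p,q) = \gcd(q,r) = \gcd(r, pq) = 1$ by primality and the ordering, and for $i \ge 4$ one expands $\gcd(d_i, d_{i+1}) = \gcd(d_i, bd_{i-1}) = \gcd(d_i, d_{i-1})$ using (ii) and inducts.

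For part (iv), the key extraction is to read off $p \mid (a^2 + b)$ from the identity $pq = (a^2+b)q + abp$ (reducing mod $p$ and using $\gcd(p,q) = 1$). Then the three-step shifted recurrence
\begin{equation*}
d_{i+3} \;=\; a d_{i+2} + b d_{i+1} \;=\; (a^2+b)d_{i+1} + ab\, d_i
\end{equation*}
reduces mod $p$ to $d_{i+3} \equiv ab\, d_i \pmod{p}$, and since $p \nmid a$ (by (i)) and $p \nmid b$ (by (ii)), this yields $p \mid d_{i+3} \iff p \mid d_i$. The base cases $p \mid d_2$, $p \nmid d_3$, $p \nmid d_4$ close the induction. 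For part (v), the analogous extraction is $q \mid a$: reducing $p(q - ab) = (a^2+b)q$ modulo $q$ gives $q \mid pab$, and since $q \ne p$ and $\gcd(q,b) = 1$ by (ii), we conclude $q \mid a$. Then $d_{i+2} = a d_{i+1} + b d_i \equiv b d_i \pmod q$, so (using $\gcd(q,b) = 1$) $q \mid d_{i+2} \iff q \mid d_i$, and the base cases $q \nmid d_2$, $q \mid d_3$ finish the argument.

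The main obstacle is the extraction step in parts (iv) and (v), which requires choosing the right linear combination of the two defining equations so that a specific prime drops out: the identity $pq = (a^2+b)q + abp$ is what reveals $p \mid (a^2+b)$, and the same identity mod $q$ is what reveals $q \mid a$. Once these two divisibilities are in hand, the periodic patterns follow cleanly from the appropriately shifted recurrences. Everything else is bookkeeping analogous to Lemma~\ref{l3}, modified only by the fact that $d_5 = pq$ contains a factor of $p$ (driving the mod-$3$ pattern) in place of the mod-$2$ pattern obtained when $d_4 = p^2$.
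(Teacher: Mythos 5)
Your proposal is correct and follows essentially the same route as the paper's proof in the Appendix: the identities $r = aq+bp$ and $pq = ar+bq = (a^2+b)q+abp$ for parts (i)--(ii), the standard $\gcd$ reductions for (iii), and the extractions $p\mid(a^2+b)$ and $q\mid a$ feeding the period-$3$ and period-$2$ inductions for (iv) and (v). The only cosmetic differences are that you derive $q\mid a$ from $p(q-ab)=(a^2+b)q$ rather than directly from $pq=ar+bq$, and you phrase (iv) via the explicit step-three congruence $d_{i+3}\equiv ab\,d_i \pmod p$; both are equivalent to the paper's argument.
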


\begin{lem}\label{l6}
Suppose that the first $4$ numbers in $S'_N$ are $p < q < r < s$. If $N$ is small recurrent with $U(p, q, a, b)$, the following hold:
\begin{enumerate}
    \item [i)] $\gcd(a, b) = 1$. 
    \item [ii)] For all $d_i\in S'_N$ with $i\ge 3$, $\gcd(b, d_i) = 1$.
    \item [iii)] For $d_i, d_{i+1}\in S'_N$, $\gcd(d_i, d_{i+1}) = 1$. 
    \item [iv] For $d_i\in S'_N$, $\gcd(a, d_i) = 1$.
    \item [v)] For $d_i, d_{i+2}\in S'_N$, $\gcd(d_i, d_{i+2}) = 1$. 
\end{enumerate}
\end{lem}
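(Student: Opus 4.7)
The plan is to imitate the inductive strategy of Lemmas \ref{l1}, \ref{l3}, and \ref{l4}, with the simplification that every element of the initial block $d_2=p,\, d_3=q,\, d_4=r,\, d_5=s$ is a prime distinct from the others. The recurrence provides the two crucial identities
\[
r\ =\ aq+bp,\qquad s\ =\ ar+bq,
\]
and essentially every part of the lemma is obtained by combining these with the general recursion $d_{i+1}=ad_i+bd_{i-1}$.

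For (i), I would observe that $\gcd(a,b)$ divides both $r$ and $s$; since $r$ and $s$ are distinct primes we must have $\gcd(a,b)=1$. For (ii), I would do an induction that walks the index downward. If $k=\gcd(b,d_i)>1$ and $i=3$ then $k=q$ and $q\mid r$ via $r=aq+bp$, a contradiction; if $i=4$ then $k=r$ and $r\mid s$ via $s=ar+bq$, again impossible. For $i\ge 5$ the identity $d_i=ad_{i-1}+bd_{i-2}$ together with $\gcd(a,b)=1$ forces $k\mid d_{i-1}$, so $\gcd(b,d_{i-1})>1$, and the induction terminates at the already-handled case $i=3$.

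Parts (iii), (iv), and (v) then follow the standard template. For (iii) the base cases $i\le 4$ are immediate from distinctness of the primes $p,q,r,s$, and for $i\ge 5$ the relation
\[
\gcd(d_i,d_{i+1})\ =\ \gcd(d_i,\, ad_i+bd_{i-1})\ =\ \gcd(d_i,bd_{i-1})\ =\ \gcd(d_i,d_{i-1})
\]
uses (ii) and closes the induction. For (iv) the base cases are $p\nmid a$ (from $r=aq+bp$ and the primality of $r$) and $q\nmid a$ (from $s=ar+bq$ and the primality of $s$); for $i\ge 4$, if a prime divisor $k$ of $a$ divides $d_i$ then $k\mid bd_{i-2}$, and since $\gcd(a,b)=1$ we obtain $k\mid d_{i-2}$, reducing to the base cases by induction. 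For (v) the base cases $\gcd(d_2,d_4)=\gcd(p,r)=1$ and $\gcd(d_3,d_5)=\gcd(q,s)=1$ are immediate, and for $i\ge 4$
\[
\gcd(d_i,d_{i+2})\ =\ \gcd(d_i,\, ad_{i+1}+bd_i)\ =\ \gcd(d_i,ad_{i+1})\ =\ 1
\]
by parts (iii) and (iv).

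The only step requiring care is the $i\ge 5$ reduction in (ii), since it is what makes (iii), (iv), and (v) go through; the rest of the argument is routine once (i) and (ii) are in hand. I expect no serious obstacle, and the whole proof should be shorter than those of Lemmas \ref{l1} and \ref{l3} because the initial block contains no prime-power or semiprime entry that would have to be treated by a separate case distinction.
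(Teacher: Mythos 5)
Your proposal is correct and follows essentially the same route as the paper: establish $\gcd(a,b)=1$ from the two prime values $r=aq+bp$ and $s=ar+bq$, prove (ii) and (iv) by downward induction using the recurrence $d_i=ad_{i-1}+bd_{i-2}$ together with (i), and then deduce (iii) and (v) from $\gcd(d_i,d_{i+1})=\gcd(d_i,bd_{i-1})$ and $\gcd(d_i,d_{i+2})=\gcd(d_i,ad_{i+1})$ exactly as the paper does. The only differences are cosmetic (you treat $i=4$ in part (ii) as an extra base case rather than folding it into the reduction), so no changes are needed.
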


\section{Small recurrent numbers}\label{small}
We first find all small recurrent numbers with $|S'_N|\ge 4$. Then we check which $N$ is small recurrent out of all $N$ with $|S'_N|\le 3$ at the end of this section. As we rely heavily on case analysis, we underline possible forms of $N$ throughout our analysis for the ease of later summary. 

\subsection{The case $|S'_N|\ge 4$}  Let $d_2 = p$ for some prime $p$. Then $d_3$ is either $p^2$ or $q$ for some prime $q > p$. If $d_3 = p^2$, according to Proposition \ref{p1}, we know that \underline{$N = p^k$ or $N = p^kq$ for some $k\ge 1$ and $q > p^k$}. 

Assume, for the rest of this subsection, that $d_3 = q$ for some prime $q > p$. Then $d_4 = pq, p^2, r$ for some prime $r > q$.

\subsubsection{When $d_4 = pq$}
Since $S'_N$ satisfies $U(p, q, a, b)$, we get $pq = aq + bp$. So, $p|a$ and $q|b$. Write $a = pm$ for some $m\in \mathbb{Z}$ and get $b = (1-m)q$. Since $p^2\nmid N$ and $q$ divides $d_5 = apq + bq$, we can write 
$$d_5 \ =\ p^s q^tr_1^{\ell_1}\cdots {r_k}^{\ell_k},$$
where $s\le 1$, $t\ge 1$, and $r_i$'s are primes strictly greater than $pq$. If some $\ell_i\ge 1$, then $pq < r_i < d_5$ and $r_i\in S'_N$, a contradiction. Hence, $\ell_i = 0$ for all $i\le k$ and $d_5 = p^sq^t$. Since $d_5 > pq$ and $s\le 1$, we know that $t\ge 2$. 

\begin{enumerate}
    \item[a)] If $s = 1$, $d_5 = pq^2 > q^2 > d_4$ and $q^2|d_5$, so $q^2\in S'_N$, a contradiction. 
    \item[b)] If $s = 0$, $d_5 = q^t$ for some $t\ge 2$. Since $d_5$ is the next number after $d_4$ in increasing order, $d_5 = q^2$. Using the linear recurrence, we obtain $q^2 = apq + bq$, so $q = ap + b = p^2m + (1-m)q$. It follows that $p^2m = mq$. We arrive at $m = 0$, $a = 0$, and $b = q$. Hence, all elements of $S'_N$ are divisible by either $p$ or $q$. If $N$ has a prime factor $r\ge \sqrt{N}$, by Proposition \ref{p2}, $r$ is unique. We conclude that \underline{$N = pq^k$ or $pq^kr$ for some $k\ge 2$, $p < q$, and $pq^k < r$}.
\end{enumerate}

\subsubsection{When $d_4 = p^2$}
The first few divisors of $N$ are $1 < p < q < p^2$. 
We have $p^2 = aq + bp$, so $p|a$. Write $a = pm$ and get $b = p-mq$. We argue for possible forms of $d_5$. Let $r$ be the largest prime factor of $d_5$. If $r > q$, then $r > p^2$ and $d_5 = r$. Otherwise, if $r \le q$, then $d_5 = p^\ell q^k$ for some $\ell, k\ge 0$. Suppose that $k\ge 2$. We get $$d_5\ \ge\ q^2 \ >\ pq \ >\ d_4\mbox{ and }pq\  |\ N,$$ a contradiction. Hence, $k\le 1$. If $k = 0$, then 
$$d_5 \ =\ p^3 \ >\ pq \ >\ d_4\mbox{ and }pq \ |\ N,$$
another contradiction. Therefore, $k = 1$ and $d_5 = pq$. We conclude that either $d_5 = r$ for some $r > q$ or $d_5 = pq$. 

\begin{enumerate}
    \item [a)] If $d_5 = pq$, then $bq + ap^2 = pq$. So, $(p-mq)q + mp^3 = pq$, which gives $mq^2 = mp^3$. Hence, $m = 0$, $a = 0$, and $b = p$. We know that elements of $S'_N$ are divisible by either $p$ or $q$. If $N$ has a prime factor $r'$ at least $ \sqrt{N}$, by Proposition \ref{p2}, $r'$ is unique. Hence, either $N = p^\ell q^k$ or $p^\ell q^k r'$ for some prime $r' > p^\ell q^k$, $\ell \ge 2$, and $k\ge 1$.
    
    Case a.i) $N = p^\ell q^k r'$. We claim that $k = 1$. Indeed, if $k\ge 2$, then 
    $$q^4 \ <\ q^2 r' \ <\ N \ \Longrightarrow\ q^2 \ <\ \sqrt{N}\ \Longrightarrow\ q^2\in S'_N.$$
    Since $b = p$, we know that $p|d$ for all $d \ge d_4$ and $d\in S'_N$, which contradicts $q^2\in S'_N$. Hence, \underline{$N = p^\ell q r'$ for some $\ell\ge 2$, some prime $r' > p^\ell q$, and $\sqrt{q} < p < q$.}
    
    Case a.ii) $N = p^\ell q^k$. As above, $q^2\notin S'_N$. If $k\ge 2$, then 
    $$q^2\ \ge\ \sqrt{N} \ \Longrightarrow\ q^4 \ \ge\ N \ =\  p^{\ell}q^{k}\ =\ p^{\ell-2}p^2q^k\ >\ q^{k+1}.$$
    Hence, $k < 3$, which implies that $k = 2$. In this case, $N = p^\ell q^2$ and 
    $$q^2 \ >\ p^\ell \ >\ q^{\ell/2}\ \Longrightarrow \ \ell \ \le 3.$$
    We conclude that one of the following holds:
    \begin{itemize}
        \item \underline{$N = p^\ell q$ for some $\ell\ge 2$ and $\sqrt{q} < p < q$},
        \item \underline{$N = p^2 q^2$ for some $p < q < p^2$},
        \item \underline{$N = p^3 q^2$ for some $p^{3/2} < q < p^2$}.
    \end{itemize}
    \item[b)] $d_5 = r$. 

\begin{prop}\label{l5}
Suppose that the first $4$ numbers in $S'_N$ are $p < q < p^2 < r$. Then $|S_N| \le 7$. As a result, $|S'_N| \le 6$.
\end{prop}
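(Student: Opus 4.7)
My plan is to determine the few admissible values of $d_6$ from the recurrence, and then eliminate each branch within one or two further steps, using Lemma~\ref{l1} throughout. Writing $d_4 = p^2 = aq + bp$ forces $a = pm$ and $b = p - mq$ for some integer $m$, and I record the consequences $r = pq + m(p^3 - q^2)$ and $d_6 = p\bigl(p^2 + m^2(p^3 - q^2)\bigr)$.

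First I pin down $d_6$: Lemma~\ref{l1}(ii) gives $p \mid d_6$, so $d_6/p$ is a divisor of $N$ strictly between $p$ and $d_6$; since the only divisors of $N$ below $d_6$ are $\{1,p,q,p^2,r\}$, this forces $d_6 \in \{pq, p^3, pr\}$. Matching with the explicit formula rules out $d_6 = p^3$ (which would require $m = 0$ and hence $r = pq$ composite). In the branch $d_6 = pr$ (necessarily $q < p^{3/2}$), the recurrence gives $d_7 = a d_6 + b d_5 = r\bigl(p + m(p^2 - q)\bigr)$, a multiple of $r$, contradicting $\gcd(d_5, d_7) = 1$ from Lemma~\ref{l1}(v). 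Hence $d_7 \notin S'_N$, so $|S'_N| \le 5$ and $|S_N| \le 7$.

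The other branch $d_6 = pq$ (with $m^2(q^2 - p^3) = p^2 - q$, $p^{3/2} < q < p^2$, and $m > 0$ forced by the ordering $r < pq$) I handle by assuming $|S_N| \ge 8$ for contradiction. An argument analogous to Proposition~\ref{p2}, together with Lemma~\ref{l1}(ii)(v), shows that every prime factor of $d_7$ is a prime of $N$ larger than $r$, and the smallest such must itself be $d_7$, so $d_7 = s$ is prime. The same quotient argument applied to $d_8$ gives $d_8 \in \{p^3, pr, p^2q, ps\}$, the value $pq$ being excluded by $d_8 > s > pq$. Using the recurrence identity $d_8 = p\bigl(pq + m(s - q^2)\bigr)$ and the bound $|m| < p$ (from $m^2 < p^2 - q$), three of these are eliminated at once: $d_8 = ps$ forces $s \mid (p - mq)$ with $|p - mq| < pq < s$; $d_8 = p^2 q$ gives $m(s - q^2) = 0$; and $d_8 = pr$ gives $s = p^3$, violating primality.

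The main obstacle is the surviving sub-case $d_8 = p^3$, where $m(s - q^2) = p(p - q)$ determines $s$ in terms of $(p, q, m)$. I would close the argument by advancing to $d_9 = p^4 m + (p - mq) s$: Lemma~\ref{l1}(ii)(v) forces $\gcd(d_9, ps) = 1$, so every prime factor of $d_9$ must be a new prime of $N$ larger than $s$, and combining with the lower bound $N \ge p^3 q r s$ coming from the divisors already located should contradict $d_9 < \sqrt{N}$. Alternatively, the integrality of $m^2 = (p^2 - q)/(q^2 - p^3)$ subject to $p^{3/2} < q < p^2$ restricts the admissible $(p, q, m)$ to an extremely short list (essentially $(2, 3, 1)$), which can then be disposed of by a direct computation.
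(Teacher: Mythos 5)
Your reduction to the single sub-case $d_8 = p^3$ is correct and follows a genuinely different route from the paper's: the identification $d_6 \in \{pq, p^3, pr\}$ via the quotient $d_6/p$, the elimination of $p^3$ and $pr$, the forcing of $d_7 = s$ prime, and the elimination of $d_8 \in \{ps, p^2q, pr\}$ all check out. But the sub-case you yourself flag as the main obstacle is not closed by either of your proposed endings, and this is a genuine gap. Your first ending asserts that $\gcd(d_9, q) = \gcd(d_9, r) = 1$, so that every prime factor of $d_9$ is a new prime exceeding $s$. Lemma \ref{l1} gives you only $p \nmid d_9$ (item ii), $\gcd(d_9, d_8) = \gcd(d_9, p^3) = 1$ (item iv), and $\gcd(d_9, d_7) = \gcd(d_9, s) = 1$ (item v); nothing controls $\gcd(d_9, q)$ or $\gcd(d_9, r)$, because $q = d_3$ and $r = d_5$ are too far back in the sequence for items iv) and v) to reach. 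Worse, the claim is actually false in this sub-case: from $m^2(q^2 - p^3) = p^2 - q$ and $ms = p^2 - pq + mq^2$ one computes $m\,d_9 = m\bigl(p^4 m + (p - mq)s\bigr) \equiv p^4 m^2 + p\cdot ms \equiv -p^3 + p^3 \equiv 0 \pmod{q}$, and since $0 < m < p < q$ this forces $q \mid d_9$ (in your example $(p,q,m) = (2,3,1)$ one gets $d_9 = 9 = q^2$). So the intended contradiction evaporates.

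Your second ending is also not a proof as it stands. The integrality of $m^2 = (p^2-q)/(q^2-p^3)$ does pin $q$ into the short window $p^{3/2} < q < \sqrt{p^3+p^2}$, and $m = 1$ does force $q(q+1) = p^2(p+1)$, hence $(p,q) = (2,3)$; but for $m \ge 2$ you are left with a Diophantine condition you have not shown to be unsolvable, and ``essentially $(2,3,1)$'' is an assertion, not an argument. Moreover, even $(p,q,m) = (2,3,1)$ is not a finite computation: there the recurrence is $d_{i+1} = 2d_i - d_{i-1}$, i.e.\ $d_i = i$, and ruling out $|S_N| \ge 8$ requires showing that no $N$ can be divisible by every integer up to $\sqrt{N}$ (an $\lcm$ growth estimate), since $N$ itself is unbounded. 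For contrast, the paper avoids all of this branching with a uniform induction: assuming $|S_N| \ge 2i$ with $i \ge 4$, it shows $pd_{2i-3} \in S'_N$, rules out $pd_{2i-3} = d_{2i-2}$ and $pd_{2i-3} = d_{2i}$ (the latter via the periodicity of $d_j$ modulo $a^2+b$ together with Lemma \ref{l1}(iii)), and concludes $|S_N| \ge 2i+2$, which iterates to a contradiction. You would need either to import that argument for your last sub-case or to find a genuinely finite way to close it.
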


\begin{proof}
Assume that $|S_N|\ge 2i$ for some $i\ge 4$. We obtain a contradiction by showing that $|S_N|\ge 2i+2$. By Lemma \ref{l1} item ii), $p\nmid d_{2i-1}$, $p|d_{2i-2}$, and $p\nmid d_{2i-3}$. By Lemma \ref{l1} item v), $\gcd(d_{2i-1}, d_{2i-3}) = 1$, so $p^2d_{2i-1}d_{2i-3}$ divides $N$. Hence, $pd_{2i-3}\in S_N'$. 

If $pd_{2i-3} = d_{2i-2}$, then 
$$pd_{2i-3} \ =\ ad_{2i-3} + bd_{2i-4} \ \Longrightarrow \ d_{2i-3}\ |\ bd_{2i-4},$$
which contradicts Lemma \ref{l1} items iii) and iv).

If $pd_{2i-3} = d_{2i}$, then
\begin{align*}pd_{2i-3} \ =\ ad_{2i-1} + bd_{2i-2} &\ =\ a(ad_{2i-2} + bd_{2i-3}) + bd_{2i-2}\\
&\ =\ (a^2+b)d_{2i-2} + abd_{2i-3}.
\end{align*}
Therefore, $d_{2i-3}$ divides $a^2+b$. It is easy to check that for $d_j\in S'_N$, the sequence $d_j\Mod a^2+b$ is congruent to 
$$1, p, q, p^2, abp, abq, abp^2, (ab)^2p, (ab)^2q, (ab)^2p^2, \ldots.$$
Hence, we can write
$$d_{2i-3} \ =\ (a^2+b)\ell + a^kb^ks,$$
for some $\ell\in \mathbb{Z}$, some $k\ge 1$, and some $s\in \{p, q, p^2\}$. Since $d_{2i-3}|(a^2+b)$, $d_{2i-3}|a^kb^ks$. By Lemma \ref{l1} item iii), $d_{2i-3}|s$; that is, $d_{2i-3} \le p^2$. However, $d_{2i-3}\ge d_5 > d_4 = p^2$, a contradiction. 

We conclude that $pd_{2i-3} \ge d_{2i+2}$. Since $pd_{2i-3}\in S'_N$, we know that $d_{2i+2}\in S'_N$ and $|S_N|\ge 2i+2$.
\end{proof}

\begin{prop}\label{l2}
Suppose that the first $4$ numbers in $S'_N$ are $p < q < p^2 < r$. If $N$ is small recurrent, then $|S'_N| \neq 4, 6$.
\end{prop}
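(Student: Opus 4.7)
The plan is to treat this as a pure divisor-counting problem, exploiting equation \eqref{e5}. The hypothesis provides three distinct prime divisors $p, q, r$ of $N$ together with the condition $p^2 \mid N$ (since $p^2 \in S'_N$). Consequently $p^2 q r \mid N$, so that
$$\tau(N) \ \ge \ (2+1)(1+1)(1+1) \ = \ 12.$$
I would like to derive a contradiction in each of the two assumed sizes by pinning down $\tau(N)$ exactly via \eqref{e5} and reconciling it with this lower bound and the shape of the prime factorization of $N$.

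For $|S'_N| = 4$, equation \eqref{e5} forces $\tau(N) \in \{10, 11\}$, immediately contradicting $\tau(N) \ge 12$. For $|S'_N| = 6$, the same identity gives $\tau(N) \in \{14, 15\}$, and I would split by the number of distinct prime factors of $N$. If $N$ has four or more distinct prime divisors, then $\tau(N) \ge 3 \cdot 2^3 = 24$, a contradiction. Otherwise $N$ has exactly three distinct prime divisors, which must be $p, q, r$; writing $N = p^a q^b r^c$ with $a \ge 2$ and $b, c \ge 1$ yields $\tau(N) = (a+1)(b+1)(c+1)$ as a product of three integers each at least $2$. The closing move is a short arithmetic check that neither $14 = 2 \cdot 7$ nor $15 = 3 \cdot 5$ admits such a factorization, completing the contradiction.

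The main point, and the one to pin down carefully before the counting kicks in, is justifying that $r$ is genuinely a prime distinct from $p$ and $q$. Both facts are already packaged in Case~b) of the preceding case analysis, where $d_5 = r$ is shown to be a prime greater than $p^2$. Once these structural ingredients are in place, no further use of the recurrence $U(p, q, a, b)$ or of Lemma~\ref{l1} is required: the conclusion follows purely from $\tau$-arithmetic, and the only mildly delicate step is the enumeration of three-factor decompositions of $14$ and $15$ with all factors $\ge 2$.
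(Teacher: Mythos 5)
Your proposal is correct and follows essentially the same route as the paper: apply \eqref{e5} to pin down $\tau(N)$, use the fact that $p^2qr \mid N$, and rule out the resulting values of $\tau(N)$ by the multiplicative formula \eqref{e1}. Your handling of the $|S'_N|=4$ case is marginally more direct (the lower bound $\tau(N)\ge 12$ immediately beats $11$, with no need to discuss a possible fourth prime factor), but the substance is identical.
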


\begin{proof}
If $|S'_N| = 4$, then \eqref{e5} gives $\tau(N) = 10$ or $11$. Note that $N$ has three distinct prime factors $p, q, r$ and the power of $p$ is at least $2$. Since $2^3\cdot 3 > 11$, $N$ cannot have another prime factor besides $p, q, r$. Write $N = p^a q^b r^c$, for some $a\ge 2, b\ge 1, c\ge 1$. However, neither $(a+1)(b+1)(c+1) = 10$ nor $(a+1)(b+1)(c+1) = 11$ has a solution. Therefore, $|S'_N| \neq 4$. A similar argument gives $|S'_N|\neq 6$.
\end{proof}

By Propositions \ref{l5} and \ref{l2}, we know that $|S'_N| = 5$; that is, $\tau(N) = 12$ or $13$. Using the same reasoning as in the proof of Proposition \ref{l2}, we know that $\tau(N) = 12$ and \underline{$N = p^2qr$, where $p < q < p^2 < r$}.
\end{enumerate}

\subsubsection{When $d_4 = r$ for some $r > q$}
The possible values for $d_5$ are $p^2, pq, s$ for some prime $s > r$. 

\begin{enumerate}
    \item[a)] If $d_5 = p^2$, we can generalize the method by Chentouf.
    \begin{prop}\label{p4}
    If $N$ is small recurrent and the first four numbers of $S'_N$ are $p < q< r < p^2$, then $|S_N|\le 7$. As a result, $|S'_N|\le 6$.
    \end{prop}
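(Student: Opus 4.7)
The plan is to adapt the proof strategy of Proposition \ref{l5}, substituting Lemma \ref{l3} for Lemma \ref{l1}. I would assume for contradiction that $|S_N| \ge 8$ and construct an infinite ascending chain of divisors of $N$, contradicting finiteness of the divisor set.

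First I would establish an auxiliary fact not explicitly stated in Lemma \ref{l3}: $\gcd(a, d_i) = 1$ for every $d_i \in S'_N$. This follows by induction from the recurrence $d_{i+1} = a d_i + b d_{i-1}$ and $\gcd(a,b) = 1$ (Lemma \ref{l3} i), with base cases $\gcd(a,p) = \gcd(a,q) = \gcd(a,r) = \gcd(a,p^2) = 1$ derivable from $r = aq + bp$ and $p^2 = ar + bq$ together with the distinctness of the primes $p,q,r$.

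The main step mirrors Proposition \ref{l5}. Since $|S_N| \ge 8$, in particular $d_6, d_7 \in S'_N$. Lemma \ref{l3} iv) states that $p \mid d_i$ exactly when $i \equiv 2 \Mod 3$, so for every $k \ge 2$ with $d_{3k}, d_{3k+1} \in S'_N$, both divisors are coprime to $p$, and by Lemma \ref{l3} iii) they are coprime to each other. Hence $p^2 d_{3k} d_{3k+1}$ divides $N$, giving $p d_{3k} \le \sqrt{N}$ and so $p d_{3k} \in S_N$. Writing $p d_{3k} = d_m$ with $m > 3k$, I would rule out $m \in \{3k+1, 3k+2, 3k+3\}$: the case $m = 3k+1$ is impossible since $p \nmid d_{3k+1}$; the case $m = 3k+2$ fails because the recurrence yields $d_{3k} \mid a\, d_{3k+1}$, hence $d_{3k} \mid a$ by $\gcd(d_{3k}, d_{3k+1}) = 1$, which forces $d_{3k} = 1$ by the preliminary; the case $m = 3k+3$ fails because iterating the recurrence gives $d_{3k+3} = (a^2+b) d_{3k+1} + ab \, d_{3k}$, so $d_{3k}$ divides $a^2+b$, and a direct mod-$(a^2+b)$ computation shows that for $j \ge 6$ one has $d_j \equiv a^\ell b^\ell s \Mod {(a^2+b)}$ with $\ell \ge 1$ and $s \in \{q,r,p^2\}$, forcing $d_{3k} \mid s$ and hence $d_{3k} \le p^2 = d_5 < d_6 \le d_{3k}$, a contradiction. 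Hence $p d_{3k} \ge d_{3k+4}$, producing $d_{3k+4} \in S_N$; iterating starting at $k = 2$ yields infinitely many divisors of $N$.

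The principal obstacle is the mod-$(a^2+b)$ periodicity. The identity $d_{j+2} = (a^2+b) d_j + ab\, d_{j-1}$ immediately gives $d_{j+2} \equiv ab \, d_{j-1} \Mod {(a^2+b)}$, and starting from $(d_2, d_3, d_4, d_5) = (p, q, r, p^2)$ one traces the orbit forward to obtain the claimed form $a^\ell b^\ell s$ for $j \ge 6$. A minor bookkeeping point is the degenerate possibility $p d_{3k} = \sqrt{N}$, in which $p d_{3k}$ contributes to $S_N$ but not to $S'_N$; this merely shifts the count by one and does not affect the iteration.
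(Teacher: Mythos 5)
Your proof is correct and follows essentially the same strategy as the paper: use Lemma \ref{l3} to get $p\nmid d_{3k}d_{3k+1}$ and $\gcd(d_{3k},d_{3k+1})=1$, hence $p^2d_{3k}d_{3k+1}\mid N$ and $pd_{3k}\in S'_N$, rule out the nearby landing spots, and iterate to contradict finiteness. The only differences are cosmetic: your auxiliary fact $\gcd(a,d_i)=1$ (which does hold, by the two base cases $\gcd(a,p)=\gcd(a,q)=1$ and the reduction $\gcd(a,d_{i+1})=\gcd(a,d_{i-1})$) dispatches the case $pd_{3k}=d_{3k+2}$ more directly than the paper's mod-$a$ orbit computation, while your mod-$(a^2+b)$ analysis of $pd_{3k}=d_{3k+3}$ is superfluous, since Lemma \ref{l3} iv) already gives $p\nmid d_{3k+3}$ --- which is why the paper can jump straight to $pd_{3k}\ge d_{3k+5}$.
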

    \begin{proof}
    Suppose that $|S_N|\ge 8$.
    We show that $|S_N|\ge 3i+2$ for all $i\in \mathbb{N}$, which is a contradiction. The claim holds for $i= 2$. Assume that $|S_N|\ge 3j+2$ for some $j\ge 2$. By Lemma \ref{l3}, $p\nmid d_{3j}d_{3j+1}$ and $\gcd(d_{3j}, d_{3j+1}) = 1$. Hence, $p^2d_{3j}d_{3j+1}$ divides $N$, which implies that $pd_{3j}\in S'_N$.
    
    If $pd_{3j} = d_{3j+2} = ad_{3j+1} + bd_{3j}$, then $d_{3j}$ divides $ad_{3j+1}$. By Lemma \ref{l3}, $d_{3j}|a$. Observe that for $d_i\in S'_N$, the sequence $d_i\Mod a$ is $$1, p, q, bp, bq, b^2p, b^2q, \ldots.$$
    Write $d_{3j} = a\ell + b^ks$, for some $\ell\in\mathbb{Z}$, some $k\ge 0$, and some $s\in \{p, q\}$. We see that $d_{3j}|b^ks$ for some $k\ge 0$ and $s\in \{p, q\}$. By Lemma \ref{l3}, $d_{3j} \le q$. However, 
    $$d_{3j} \ \ge\ d_6 \ >\  d_{3} \ =\ q,$$
    a contradiction.
    
    If $pd_{3j} > d_{3j+2}$, then $pd_{3j} \ge d_{3(j+1)+2}$ by Lemma \ref{l3}. Therefore, $|S_N|\ge 3(j+1)+2$.
    \end{proof}
    
    \begin{prop}\label{p10}
    There is no small recurrent $N$ whose the first four numbers of $S'_N$ are $p < q< r < p^2$.
    \end{prop}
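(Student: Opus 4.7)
The approach combines Proposition \ref{p4}, which bounds $|S'_N|\le 6$, with the divisor-count identity \eqref{e5} and the arithmetic restriction in Lemma \ref{l3} iv). Since $p,q,r,p^2\in S'_N$, we have $p^2qr\mid N$, so $N=p^aq^br^c M$ with $a\ge 2$, $b,c\ge 1$, and $\gcd(M,pqr)=1$. A fourth distinct prime factor would force $\tau(N)\ge 3\cdot 2\cdot 2\cdot 2=24$, contradicting $\tau(N)\le 15$ implied by \eqref{e5} and $|S'_N|\le 6$. Hence $M=1$ and $(a+1)(b+1)(c+1)=\tau(N)\ge 12$.

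First I would rule out $|S'_N|\in\{4,6\}$ by divisor counting. For $|S'_N|=4$, \eqref{e5} gives $\tau(N)\in\{10,11\}<12$, impossible. For $|S'_N|=6$, $\tau(N)\in\{14,15\}$; neither $14=2\cdot 7$ nor $15=3\cdot 5$ can be written as a product of three integers with one factor $\ge 3$ and the other two $\ge 2$, so this case is also impossible.

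In the remaining case $|S'_N|=5$, \eqref{e5} gives $\tau(N)\in\{12,13\}$; since $13$ is prime and $N$ has three distinct prime factors, we must have $\tau(N)=12$, forcing $(a+1,b+1,c+1)=(3,2,2)$, i.e., $N=p^2qr$. Using $p<q<r<p^2$, I would verify that the twelve divisors of $N$ sort as $1<p<q<r<p^2<pq<pr<qr<p^2q<p^2r<pqr<p^2qr$, and that $pq<\sqrt{N}<pr$ (both inequalities being equivalent to $q<r$). Hence $S'_N=\{p,q,r,p^2,pq\}$, so $d_6=pq$. But Lemma \ref{l3} iv) gives $p\mid d_6$ iff $6\equiv 2\pmod 3$, which fails; since $p\mid pq$, this yields the desired contradiction. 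The main (and essentially only) obstacle is the divisor-ordering verification, but every comparison reduces directly to $p<q<r<p^2$ and is routine.
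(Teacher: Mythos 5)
Your proof is correct and follows essentially the same route as the paper: Proposition \ref{p4} bounds $|S'_N|\le 6$, divisor counting via \eqref{e5} eliminates $|S'_N|\in\{4,6\}$ and forces $N=p^2qr$ with $S'_N=\{p,q,r,p^2,pq\}$ in the remaining case. The only (immaterial) difference is the final contradiction: you invoke Lemma \ref{l3}~iv) to see $p\nmid d_6$ directly, whereas the paper writes $pq=ap^2+br$, deduces $p\mid b$, and contradicts Lemma \ref{l3}~ii).
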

    \begin{proof}
    By Proposition \ref{p4}, $|S'_N| \in \{4, 5, 6\}$. If $|S'_N| = 4$, then $\tau(N) = 10$ or $11$, none of which can be written as a product of at least three integers, each of which is at least $2$. This contradicts \eqref{e5} and the fact that $N$ has three distinct prime factors.  We arrive at the same conclusion when $|S'_N| = 6$. For $|S'_N| = 5$, we obtain $N = p^2qr$ for some primes $p < q < r < p^2$. However, this poses another contradiction. Observe that $(pq)^2 < p^2qr$, so the divisors in $S'_N$ are $p < q < r < p^2 < pq$. Since $pq = ap^2 + br$, we get $p|b$, which contradicts Lemma \ref{l3} item ii). 
    \end{proof}
    \item[b)] Suppose that $d_5 = pq$.
    \begin{prop}
    There is no small recurrent number $N$ such that the first four numbers of $S'_N$ are $p < q< r < pq$.
    \end{prop}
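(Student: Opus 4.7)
The plan is to extend the method of Proposition \ref{p10}: combine the recurrence $U(p,q,a,b)$ with the position parities of Lemma \ref{l4} to bootstrap from $d_5 = pq$ up to a contradiction at $d_7$ or $d_9$, handling the leftover small cases by a $\tau(N)$-count.

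First I would extract the basic identities. From $d_5 = pq = ar + bq$ one reads off $q \mid a$, so $a = qc$ and $b = p - cr$; substituting into $r = aq + bp$ gives the key relation
$$c(q^2 - rp) \;=\; r - p^2, \qquad c \neq 0, \quad p \nmid c.$$
A quick argument shows $p^2 \nmid N$ (otherwise $p^2$ would have to sit beyond $pq$, forcing $p > q$), so $e_p = 1$; similarly any prime factor of $N$ besides $p, q, r$ must exceed $pq$, since no other slot before $d_5$ is available.

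Assuming $|S'_N| \geq 6$, Lemma \ref{l4} forces $d_6$ to be coprime to $pq$. Since $pr \mid N$ with $pq < pr$ but $p \mid pr$, we cannot have $d_6 = pr$; and $r^2$ is ruled out because $r^2 > pr$. The only divisor strictly between $pq$ and $pr$ coprime to $pq$ is then a fourth prime factor $r_2 \mid N$ with $pq < r_2 < pr$, so $d_6 = r_2$. Assuming further $|S'_N| \geq 7$, I would split on $e_q$. If $e_q = 1$, the forced value $d_7 = qr$ plugged into the recurrence reduces (after dividing by $q$) to $c(r_2 - pr) = r - p^2$, which combined with the key relation yields $r_2 = q^2$, contradicting primality of $r_2$.

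The case $e_q \geq 2$ is what I expect to be the main obstacle: now $d_7 = q^2$, and the analogous computation gives $c(r_2 - pr) = q - p^2$, which is not immediately contradictory. To finish, I would use that $N$ has at least four distinct primes with $e_p = 1$ and $e_q \geq 2$, so
$$\tau(N) \;=\; 2(e_q+1)(e_r+1)(e_{r_2}+1)\prod_i (e_{r_i}+1) \;\geq\; 24,$$
and hence $|S'_N| \geq 11$ (as $N$ is not a square); in particular $d_9$ exists. Position parities force $d_8 = pr$ (with $pr > q^2$ forced by the sign pattern in the key relation) and then $q \mid d_9$, $p \nmid d_9$. Two subcases arise depending on whether $r_2 < qr/p$ or not: if $r_2 < qr/p$ then $pr_2 < qr$, giving $d_9 = pr_2$, whose factor of $p$ contradicts $p \nmid d_9$; if $r_2 > qr/p$ then $d_9 = qr$, and the recurrence $qr = a d_8 + b d_7$ simplifies to
$$r\bigl[\,1 + c(q-p)\,\bigr] \;=\; pq,$$
so $1 + c(q-p) \geq 2$ is a divisor of $pq$, forcing $1 + c(q-p) \in \{p, q, pq\}$ and correspondingly $r \in \{q, p, 1\}$ --- each impossible.

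Finally, for $|S'_N| \in \{4, 5, 6\}$ (so $\tau(N) \in \{10, 12, 14\}$ since $N$ is not a square), I would use $\tau(N) = 2(e_q+1)(e_r+1)\prod_i(e_{r_i}+1)$. The values $\tau \in \{10, 14\}$ require a product of integers each $\geq 2$ to equal a prime ($5$ or $7$), impossible. The value $\tau = 12$ forces $N \in \{pqr^2, pq^2 r\}$; in each case a direct listing of divisors in increasing order shows $d_6 \in \{pr, q^2\}$, which is divisible by $p$ or $q$ and contradicts Lemma \ref{l4}.
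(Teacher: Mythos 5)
Your overall strategy is the same as the paper's: use the divisibility pattern of Lemma \ref{l4} (items iii)--v)) to pin down $d_6$ as a fourth prime and then $d_7=q^2$, $d_8=pr$, $d_9=qr$, derive a contradiction at $d_9$, and dispose of the small values of $|S'_N|$ by counting $\tau(N)$. The identities you extract ($a=qc$, $b=p-cr$, $r-p^2=c(q^2-rp)$) are correct, the $\tau$-count for $|S'_N|\in\{4,5,6\}$ is fine, and the final contradiction $r\bigl[1+c(q-p)\bigr]=pq$ is valid (indeed it already gives $r\mid pq$, so the detour through $1+c(q-p)\in\{p,q,pq\}$ is unnecessary). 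The paper reaches the same endpoint more cheaply: once $d_8=pr$ and $d_9=qr$ are forced, $\gcd(d_8,d_9)=r>1$ contradicts the coprimality of consecutive divisors (Lemma \ref{l4} iii)), with no recurrence algebra at all.

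Two local steps need repair. First, in the subcase $e_q=1$ the claim ``the forced value $d_7=qr$'' is false: $pr$ is a divisor of $N$ with $r_2<pr<qr$ that is not among $d_1,\ldots,d_6$, so $qr$ can never occupy position $7$. The case still dies, but for a simpler reason --- the smallest divisor exceeding $d_6=r_2$ is at most $pr$, and every candidate in that range is either divisible by $p$ or not divisible by $q$, contradicting Lemma \ref{l4} iv)--v); your computation ``$c(r_2-pr)=r-p^2$, hence $r_2=q^2$'' is algebra performed on an equation that never holds, so as written that subcase is not proved. Second, your dichotomy for $d_9$ (either $pr_2<qr$ or $pr_2>qr$) is not exhaustive: a priori the ninth divisor could also be $r^2$, $pq^2$, a fifth prime, etc. Each of these violates the Lemma \ref{l4} pattern outright (wrong divisibility by $p$ or $q$), and $q^3$ is excluded because $r<pq<q^2$ forces $qr<q^3$, so only $d_9=qr$ actually requires the recurrence; this should be stated rather than left implicit. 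With those two points patched, your argument is a correct, if more computational, version of the paper's proof.
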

    \begin{proof}
     Assume that $|S'_N|\ge 8$. 
     Since $p^2\notin S'_N$, $p$ divides $N$ exactly. 
     By Lemma \ref{l4}, $d_6$ is divisible neither by $p$ nor $q$. Hence, $d_6 = s$ for some prime $s > pq$. The divisor $d_7$ is divisble by $q$; hence, $d_7 = qr$ or $q^2$. The divisor $d_8$ is divisible by $p$ but not by $q$. 
     So, $d_8 = pr$, which gives that $d_7$ must be $q^2$ because $d_7 < d_8$.
     Now $q|d_9\mbox{ and } p\nmid d_9\Longrightarrow d_9= qr$. However, that $\gcd(d_8, d_9) = r$ contradicts Lemma \ref{l4}. Therefore, $|S'_N|\in\{4, 5, 6, 7\}$. Using the same argument as in the proof of Proposition \ref{p10}, we know that $|S'_N|\neq 4, 6$ and so, 
     $|S'_N|\in \{5, 7\}$. By the above argument, if $|S'_N|\ge 5$, then $d_6$ is a prime greater than $pq$. Hence, $N$ has at least $4$ distinct prime factors, so $\tau(N)$ can be written as a product of at least $4$ integers greater than $1$. Clearly, \eqref{e1} rules out the case $|S'_N| = 5$. If $|S'_N| = 7$, the above argument shows that $q^2|N$; hence, $\tau(N)$ can be written as a product of at least $4$ integers greater than $1$, one of which is greater than $2$.
     This cannot happen as $\tau(N)\in \{16, 17\}$. 
    \end{proof}
    \item[c)] Suppose that $d_5 = s$. 
    \begin{prop}
    There is no small recurrent number $N$ such that the first four numbers of $S'_N$ are $p < q< r < s$.
    \end{prop}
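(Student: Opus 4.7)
The plan is to rule out this case by combining the coprimality conditions from Lemma \ref{l6} with the size constraints forcing $d_6$ and $d_7$ into very restricted forms.

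Since $d_2, d_3, d_4, d_5 = p, q, r, s$ are four distinct primes, $pqrs \mid N$, so $\tau(N) \geq 16$ and hence $|S'_N| \geq 7$ by \eqref{e5}. By Lemma \ref{l6} items iii) and v), $d_6$ is coprime to both $r$ and $s$, so its prime factors lie in $\{p, q\} \cup \{t : t \mid N, t \notin \{p, q, r, s\}\}$. Any new prime $t \mid N$ satisfies $t > s$, and if such a $t$ properly divides $d_6$, then $t$ would itself be a divisor of $N$ strictly between $s$ and $d_6$, contradicting that $d_6$ is the smallest divisor exceeding $s$. Combined with $d_6 > s$ and the observation that $p^2 \mid N$ forces $p^2 > s$ (else $p^2$ would be one of the primes $d_3, d_4, d_5$), one obtains $d_6 \in \{p^2, pq\}$ or $d_6 = t$ for a new prime $t \in (s, pq)$.

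In the main sub-case $d_6 = pq$, eliminating $s$ from $pq = as + br$ via $s = ar + bq$ yields
\[
q(p - ab) \;=\; r(a^2 + b),
\]
so $r \mid p - ab$ and $q \mid a^2 + b$ using $\gcd(q, r) = 1$. When $a, b > 0$, both sides are positive, so $0 < p - ab < p < r$ is forced, contradicting $r \mid p - ab$ unless $p = ab$; but then $a^2 + b = 0$ is impossible. For mixed-sign $(a, b)$ I invoke $d_7$: since $pr \mid N$ with $pr > pq = d_6$, and $\gcd(d_6, d_7) = 1$ rules out $d_7 = pr$, one has $d_7 < pr$. A direct computation using $r = aq + bp$ gives
\[
d_7 - pr \;=\; apq + bs - pr \;=\; b(s - p^2).
\]
Whenever $b$ and $s - p^2$ have the same sign, $d_7 > pr$, an immediate contradiction; the two remaining cases ($b > 0, s < p^2$ and $b < 0, s > p^2$) are ruled out by combining the displayed identity with the coprimalities $\gcd(a, p) = 1$, $\gcd(b, q) = 1$ from Lemma \ref{l6} and size bounds from $r > p$, which again force $0 < |p - ab| < r$, contradicting $r \mid p - ab$.

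The remaining sub-cases are handled analogously. For $d_6 = p^2$, the same elimination gives $p^2 = abq + r(a^2 + b)$, and reducing modulo $p$ produces $p \mid a^2 + 2b$; for $p = 2$ this forces $2 \mid a$, contradicting $\gcd(a, p) = 1$, and for odd $p$ the accompanying relation $r \mid p^2 - abq$ yields a similar sign-and-size contradiction. For $d_6 = t$ a new prime, $N$ acquires a fifth distinct prime factor, pushing $\tau(N) \geq 32$ and $|S'_N| \geq 15$; tracking $d_7, d_8, \ldots$ under Lemma \ref{l6} and matching them to the actual divisors of $N$ forces further prime factors or squares already excluded, yielding a contradiction. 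The main obstacle will be the delicate mixed-sign analysis in the $d_6 = pq$ sub-case, where several consecutive terms of the recurrence must be tracked simultaneously to extract all algebraic constraints.
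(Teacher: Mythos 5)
Your setup is correct as far as it goes: the reductions $d_6\in\{p^2,\,pq,\,t\}$, the identity $q(p-ab)=r(a^2+b)$ in the $d_6=pq$ case, the divisibilities $r\mid(p-ab)$ and $q\mid(a^2+b)$, the formula $d_7-pr=b(s-p^2)$, and the deduction $d_7<pr$ are all sound, and the positive-sign case is genuinely killed. But the proof has three unresolved branches, and at least one of them rests on a claim that is false as stated. In the mixed-sign analysis you assert that the coprimalities and $r>p$ ``force $0<|p-ab|<r$.'' When $ab<0$ we have $|p-ab|=p+|ab|$, which can be arbitrarily large compared to $r$; nothing in $\gcd(a,p)=\gcd(b,q)=1$ or $r>p$ bounds $|ab|$ by $r-p$. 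So the contradiction with $r\mid(p-ab)$ simply does not materialize, and indeed in the case $a>0$, $b<0$, $s>p^2$ one only gets $p+a|b|\ge r$ and $a^2>|b|$, which are perfectly consistent. The same objection applies to the ``sign-and-size contradiction'' claimed for $r\mid(p^2-abq)$ when $p$ is odd and $d_6=p^2$ (here $p^2>s>r$, so there is no a priori size bound either), and the $d_6=t$ branch is not argued at all --- ``tracking $d_7,d_8,\ldots$ forces further prime factors or squares already excluded'' is a program, not a proof. You flag the mixed-sign case yourself as the main obstacle; it is, and it is not filled.

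For contrast, the paper avoids all of this casework with one structural observation: since $pq,pr\in S'_N$, let $d_j=pv$ be the \emph{largest} element of $S'_N$ divisible by $p$ (so $j\ge 7$). Lemma \ref{l6} items iii) and v) make $d_j,d_{j-1},d_{j-2}$ pairwise coprime, so $d_jd_{j-1}d_{j-2}\mid N$ and hence $pd_{j-2}$ is a divisor of $N$ below $\sqrt{N}$, i.e., $pd_{j-2}\in S'_N$. Maximality of $d_j$ forces $pd_{j-2}\in\{d_{j-1},d_j\}$, and each option violates a coprimality from Lemma \ref{l6}. If you want to salvage your route, you would need either a genuine size bound on $|ab|$ (which the recurrence does not supply) or a pivot to an argument of this maximal-element type; as written, the proposal does not establish the proposition.
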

    \begin{proof}
    Observe that $pq$ and $pr$ are in $S'_N$. Let $d_j = pv$ be the largest element of $S'_N$ that is divisible by $p$. Clearly, $v > p$ and $j\ge 7$. By Lemma \ref{l6}, $d_{j}, d_{j-1}$, and $d_{j-2}$ are pairwise coprime. Hence, $pvd_{j-1}d_{j-2}$ divides $N$, so $pd_{j-2}\in S'_N$. 
    
    If $pd_{j-2} = d_{j-1}$, then $p|d_{j-1}$ and so, $p|\gcd(d_{j-1}, d_j)$, which contradicts Lemma \ref{l6} item iii).
    
    If $pd_{j-2} = d_j$, then $d_{j-2} = v$ and $\gcd(d_{j-2}, d_j) = v > 1$, which contradicts Lemma \ref{l6} item v).
    
    Therefore, we have $pd_{j-2} > d_j$, which, however, contradicts that $d_j$ is the largest element of $S'_N$ that is divisible by $p$. We conclude that there is no small recurrent number $N$ such that the first four numbers of $S'_N$ are $p < q< r < s$.
    \end{proof}
\end{enumerate}

From the above analysis, we arrive at the following proposition.

\begin{prop}\label{p20}
If $N$ is small recurrent and $|S'_N|\ge 4$, then $N$ belongs to one of the following forms.
\begin{enumerate}
    \item[(S1)] $N = p^k$ or $N = p^kq$ for some $k\ge 1$ and $q > p^k$.
    \item[(S2)] $N = pq^k$ or $pq^kr$ for some $k\ge 2$, $p < q$, and $pq^k < r$.
    \item[(S3)] $N = p^k q r$ for some $k\ge 2$, some prime $r > p^k q$, and $\sqrt{q} < p < q$.
    \item[(S4)] $N = p^k q$ for some $k\ge 2$ and $\sqrt{q} < p < q$.
    \item[(S5)] $N = p^2 q^2$ for some $p < q < p^2$.
    \item[(S6)] $N = p^3 q^2$ for some $p^{3/2} < q < p^2$.
    \item[(S7)] $N = p^2qr$, where the first four numbers in $S'_N$ are $p < q < p^2 < r$.
\end{enumerate}
\end{prop}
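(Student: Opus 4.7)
The plan is to read off the proposition as the conclusion of the case analysis carried out in \S\ref{small} up to this point. Every sub-case on the initial tuple $(d_2, d_3, d_4, d_5)$ either lands on one of the forms (S1)--(S7) or is excluded, so the proof amounts to assembling these sub-cases into a single catalogue.

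The systematic structure is as follows. Write $d_2 = p$ for some prime $p$. Split on $d_3 \in \{p^2, q\}$ with $q > p$ prime. The branch $d_3 = p^2$ forces every divisor in $S'_N$ to lie in $p\mathbb{Z}$ and, together with Proposition~\ref{p1}, yields form (S1). The branch $d_3 = q$ splits further on $d_4 \in \{pq, p^2, r\}$ for a prime $r > q$. When $d_4 = pq$, the relation $pq = aq + bp$ forces $p \mid a$ and $q \mid b$; inspecting $d_5$ pins $a = 0$, $b = q$, and Proposition~\ref{p2} gives form (S2). When $d_4 = p^2$, the sub-case $d_5 = pq$ yields $a = 0$, $b = p$ and, after bounding the exponents of $p$ and $q$ via $\sqrt{N}$-constraints, produces (S3)--(S6); the sub-case $d_5 = r$ is handled using Propositions~\ref{l5} and \ref{l2} to produce (S7). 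When $d_4 = r$, every sub-case $d_5 \in \{p^2, pq, s\}$ is excluded.

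The principal obstacle is the $d_4 = p^2$, $d_5 = r$ branch, in particular the bound $|S_N| \le 7$ of Proposition~\ref{l5}. Its proof assumes $|S_N| \ge 2i$ with $i \ge 4$, constructs $p\cdot d_{2i-3} \in S'_N$ from the coprimality statements of Lemma~\ref{l1}~ii), v), and must rule out both $pd_{2i-3} = d_{2i-2}$ and $pd_{2i-3} = d_{2i}$. The second identification is the subtle one: the recurrence expansion forces $d_{2i-3} \mid a^2 + b$, and reducing $d_j \Mod{a^2 + b}$ shows $d_{2i-3} \le p^2$, contradicting $d_{2i-3} \ge d_5 > p^2$. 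A secondary technical point is the exclusion of the $d_4 = r$, $d_5 \in \{pq, s\}$ sub-cases: these use Lemmas~\ref{l4} and \ref{l6} together with $\tau(N)$-arithmetic via \eqref{e1} and \eqref{e5} to show $\tau(N)$ cannot factor as required by the minimum number of prime factors forced on $N$. Once all branches are settled, the proposition follows by collecting the surviving $N$.
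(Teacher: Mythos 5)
Your proposal follows the paper's own proof essentially verbatim: the same case split on $(d_2,d_3,d_4,d_5)$, the same use of Propositions~\ref{p1} and \ref{p2} for forms (S1)--(S2), the same $a=0$, $b=p$ analysis for (S3)--(S6), Propositions~\ref{l5} and \ref{l2} for (S7), and the elimination of the $d_4=r$ branches via Lemmas~\ref{l4} and \ref{l6} together with $\tau(N)$-arithmetic. The argument and its key technical obstacles are identified correctly; nothing further is needed.
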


These forms together establish the necessary condition for a small recurrent $N$ to have $|S'_N|\ge 4$.  
We now refine each form (if necessary) to obtain a necessary and sufficient condition. 

\begin{enumerate}
    \item[(S1)]
    \begin{itemize}
        \item If $N = p^k$, then $N$ is small recurrent with $|S'_N|\ge 4$ if $k\ge 9$. In this case, $S'_N = \{p, p^2, p^3, \ldots, p^{\lfloor (k-1)/2\rfloor}\}$ satisfies $U(p, p^2, p, 0)$. 
        \item If $N = p^k q$ for some $k\ge 1$ and $q > p^k$, then $N$ is small recurrent with $|S'_N|\ge 4$ if $k\ge 4$. In this case, $S'_N = \{p, p^2, p^3, \ldots, p^k\}$ satisfies $U(p, p^2, p, 0)$.
    \end{itemize}

    \item[(S2)]
    \begin{itemize}
        \item If $N = pq^k$ for some $k\ge 2$ and $p < q$, then $N$ is small recurrent with $|S'_N|\ge 4$ if  $\sqrt{N} = \sqrt{pq^k} > q^2$. Hence, $N = pq^k$ for some $k\ge 4$ and $p < q$.
        In this case, $S'_N = \{p, q, pq, q^2, \ldots\}$ satisfies $U(p, q, 0, q)$.
        \item If $N = pq^kr$ for some $k\ge 2$, $p < q$, and $r > pq^k$, then $N$ is small recurrent with $|S'_N|\ge 4$.
        In this case, $S'_N = \{p, q, pq, q^2, \ldots, pq^{k-1}, q^k, pq^k\}$ satisfies $U(p, q, 0, q)$.
    \end{itemize}
    
    \item[(S3)]
    If $N$ belongs to (S3), then $S'_N = \{p, q, p^2, pq, \ldots, p^k, p^{k-1}q, p^k q\}$. Since $p^2 = aq + bp$, we know that $p|a$. Write $a = pm$ for some $m\in\mathbb{Z}$ and get $b = p-mq$. Hence,
    $$pq \ =\ ap^2 + bq\ =\ p^3m + (p-mq)q\ \Longrightarrow\ mq^2 = p^3m.$$
    Therefore, $(m, a, b) = (0, 0, p)$. However, the largest element in $S'_N$, $p^k q$, is not equal to  $p\cdot p^k$. We conclude that form (S3) does not give a small recurrent number. 
    
    \item[(S4)] If $N = p^k q$ for some $k\ge 2$ and $\sqrt{q} < p < q$, then the nontrivial divisors of $N$ in increasing order is $p < q < p^2 < pq < \cdots$. In order that $|S'_N|\ge 4$, we need $(pq)^2 < p^kq$, so $q < p^{k - 2}$. Hence, $k\ge 4$. In this case, $S'_N = \{p, q, p^2, pq, \ldots\}$ satisfies $U(p, q, 0, p)$. 
    
    \item[(S5)] If $N = p^2 q^2$ for some $p < q < p^2$, then $\tau(N) = 9$. However, if $|S'_N|\ge 4$, then $\tau(N)\ge 10$ by \eqref{e5}. We conclude that form (S5) does not give a small recurrent number. 
    
    \item [(S6)] If $N = p^3 q^2$ for some $p^{3/2} < q < p^2$, then $S'_N = \{p, q, p^2, pq, p^3\}$ satisfies $U(p, q, 0, p)$.
    
    \item [(S7)] Let $N$ have form (S7). Since the first four numbers of $S'_N$ are $p < q < p^2 < r$ and $\tau(N) = 12$, we know that the fifth number in $S'_N$ must be $pq$. 
    That $p < q< p^2 < r < pq$ satisfies some $U(p, q , a, b)$ gives $a = \frac{p(pq - r)}{q^2 - p^3}$, $b = \frac{rq - p^4}{q^2 - p^3}$, and $r = pq-\sqrt{(q^2-p^3)(p^2-q)}$. We conclude that a number of form (S7) is small recurrent if and only if $p < q < p^2 < r < pq$, $(q^2 - p^3)|(pq - r)$, $(q^2-p^3)|(rq-p^4)$, and $r =  pq-\sqrt{(q^2-p^3)(p^2-q)}$. An example is $(p, q, r) = (2, 3, 5)$. We do not know if $(2, 3, 5)$ is the only set of primes that satisfy all these conditions or not. 
\end{enumerate}

From the above analysis, we obtain the proposition, which is a refinement of Proposition \ref{p20}.
\begin{prop}\label{p21} Let $p, q, r$ denote prime numbers and $k$ be some positive integer. 
A positive integer $N$ is small recurrent with $|S'_N|\ge 4$ if and only if $N$ belongs to one of the following forms.
\begin{enumerate}
    \item $N = p^k$ for some $k\ge 9$. In this case, $S'_N = \{p, p^2, p^3, \ldots, p^{\lfloor (k-1)/2\rfloor}\}$ satisfies $U(p, p^2, p, 0)$. 
    \item $N = p^k q$ for some $k\ge 4$ and $q > p^k$. In this case, $S'_N = \{p, p^2, p^3, \ldots, p^k\}$ satisfies $U(p, p^2, p, 0)$.
    \item $N = pq^k$ for some $k\ge 4$ and $p < q$. In this case, $S'_N = \{p, q, pq, q^2, \ldots\}$ satisfies $U(p, q, 0, q)$.
    \item  $N = pq^kr$ for some $k\ge 2$, $p < q$, and $r > pq^k$.
        In this case, $S'_N = \{p, q, pq, q^2, \ldots, pq^{k-1}, q^k, pq^k\}$ satisfies $U(p, q, 0, q)$.
    \item  $N = p^k q$ for some $k\ge 4$ and $\sqrt{q} < p < q$. In this case, $S'_N = \{p, q, p^2, pq, \ldots\}$ satisfies $U(p, q, 0, p)$. 
    \item $N = p^3 q^2$ for some $p^{3/2} < q < p^2$. In this case, $S'_N = \{p, q, p^2, pq, p^3\}$ satisfies $U(p, q, 0, p)$.
    \item $N = p^2qr$, where $p < q < p^2 < r < pq$, $(q^2 - p^3)|(pq - r)$, $(q^2-p^3)|(rq-p^4)$, and $r =  pq-\sqrt{(q^2-p^3)(p^2-q)}$. In this case, $S'_N = \{p, q, p^2, r, pq\}$ satisfies $U\left(p, q, \frac{p(pq - r)}{q^2 - p^3}, \frac{rq - p^4}{q^2 - p^3}\right)$.
\end{enumerate}
\end{prop}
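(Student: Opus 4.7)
The plan is to refine Proposition \ref{p20} case by case, keeping the forms that actually produce small recurrent numbers with $|S'_N|\ge 4$ and discarding those that do not. Since Proposition \ref{p20} already supplies the necessary direction, the work is to impose the extra constraints that (a) make $|S'_N|\ge 4$ and (b) force the sequence $S'_N$ to satisfy some $U(p,q,a,b)$.

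First I would dispose of the ``pure power'' and ``prime-power times prime'' families (S1). For $N=p^k$, the condition $|S'_N|\ge 4$ forces $\lfloor (k-1)/2\rfloor\ge 4$, i.e.\ $k\ge 9$, and in that case the sequence is geometric, so $U(p,p^2,p,0)$ works trivially. For $N=p^k q$ with $q>p^k$, all divisors below $\sqrt N$ are powers of $p$, so we need $k\ge 4$ and the same geometric recurrence holds. For families (S2), (S4), (S6) the divisors below $\sqrt N$ come in the patterns $\{p,q,pq,q^2,\dots\}$ or $\{p,q,p^2,pq,\dots\}$ or $\{p,q,p^2,pq,p^3\}$; in each case I would read off $(a,b)$ from the first three terms, verify the recurrence extends, and impose the size condition $|S'_N|\ge 4$ to pin down the range of $k$ (leading to $k\ge 4$ for (S2) and (S4)). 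For (S2) with an extra prime $r>pq^k$, the top of $S'_N$ is $\{\dots, pq^{k-1},q^k, pq^k\}$, and I must check that the recurrence with $(a,b)=(0,q)$ continues to the last term, which it does since multiplication by $q$ alternates the pattern correctly.

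Next I would eliminate (S3) and (S5). For (S5), $N=p^2q^2$ has $\tau(N)=9$, which together with \eqref{e5} gives $|S'_N|=3$, contradicting $|S'_N|\ge 4$. For (S3), the first three terms $p<q<p^2$ force $a=pm$ and $b=p-mq$ via $p^2=aq+bp$; pushing the recurrence to $pq=ap^2+bq$ yields $m=0$, so $(a,b)=(0,p)$, but then the last divisor $p^kq$ of $S'_N$ would have to equal $p\cdot p^k=p^{k+1}$, which fails.

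The main obstacle is case (S7). Here $N=p^2qr$ with divisors $p<q<p^2<r<pq$ (the ordering $r<pq$ being forced by $\tau(N)=12$ and the first four divisors), and one must solve for $(a,b)$ from
\begin{align*}
p^2 &\ =\ aq+bp,\\
r   &\ =\ ap^2+bq,\\
pq  &\ =\ ar+bp^2.
\end{align*}
The first two equations give $a=\frac{p(pq-r)}{q^2-p^3}$ and $b=\frac{rq-p^4}{q^2-p^3}$, and substituting into the third and simplifying produces the quadratic in $r$ whose relevant root is $r=pq-\sqrt{(q^2-p^3)(p^2-q)}$ (the other root exceeds $pq$ and is therefore excluded). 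Divisibility of the numerators by $q^2-p^3$ is needed for $(a,b)\in\mathbb{Z}^2$, and one verifies that these conditions together with the formula for $r$ are not only necessary but sufficient, since the recurrence terminates at the fifth term. Collecting all surviving forms yields exactly the seven cases listed in Proposition \ref{p21}.
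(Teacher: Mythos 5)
Your proposal is correct and follows essentially the same route as the paper: both take Proposition \ref{p20} as the necessary direction, then refine each of (S1)--(S7) by imposing the size condition $|S'_N|\ge 4$, eliminating (S3) via the forced $(a,b)=(0,p)$ failing at the top divisor and (S5) via $\tau(N)=9<10$, and solving the three linear-recurrence equations in case (S7) to obtain $a$, $b$, and the quadratic giving $r=pq-\sqrt{(q^2-p^3)(p^2-q)}$. The case-by-case verifications and the resulting bounds on $k$ match the paper's.
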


\subsection{The case $|S'_N|\le 3$}

If $|S'_N|\le 3$, then $\tau(N)\le 9$. We use the classifications of those $N$ from the introduction to obtain the following proposition.
\begin{prop}\label{p22}
Let $p, q, r$ denote prime numbers and $k$ be some positive integer. 
A positive integer $N > 1$ is small recurrent with $|S'_N|\le 3$ if and only if $N$ belongs to one of the following forms.
\begin{enumerate}
    \item $N = p^k$ for some $k\le 8$. In this case, $S'_N = \{p, p^2, \ldots, p^{\lfloor (k-1)/2\rfloor}\}$ satisfies $U(p, p^2, p, 0)$.
    \item $N = pq$ for some $p < q$. In this case, $S'_N = \{p\}$.
    \item $N = pq^2$ for some $p < q$. In this case, $S'_N = \{p, q\}$.
    \item $N = p^2q$ for some $p < q$. If $q < p^2$, then $S'_N = \{p, q\}$. If $q > p^2$, then $S'_N = \{p, p^2\}$.
    \item $N = pq^3$ for some $p < q$. In this case, $S'_N = \{p, q, pq\}$.
    \item \label{i1}$N = p^3q$ for some $p < q$. If $p < q < p^2$, then $S'_N = \{p, q, p^2\}$. If $p^3 < q$, then $S'_N = \{p, p^2, p^3\}$. (The case $p^2 < q < p^3$ is eliminated because the three elements in $S'_N$ would be $p < p^2 < q$. However, there is no integral solution $(a, b)$ to $q = ap^2 + bp$.) 
    \item $N = p^2 q^2$ for some $p < q$. If $p < q < p^2$, then $S'_N = \{p, q, p^2\}$. (The case $p^2 < q$ is eliminated due to the same reason as in item \eqref{i1}.)
    \item $N = pqr$ for some $p < q < r$. If $r < pq$ and there is an integral solution $(a, b)$ to $r = aq + bp$, then $S'_N = \{p, q, r\}$. If $r > pq$, then $S'_N = \{p, q, pq\}$.
\end{enumerate}
\end{prop}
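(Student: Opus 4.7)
The plan is to exploit the classification of all $N$ with $\tau(N)\le 9$ given in Section~\ref{pre}. Since \eqref{e5} forces $\tau(N)\le 9$ whenever $|S'_N|\le 3$, every candidate $N$ lies in one of the finitely many families enumerated in items (i)--(iv), and the proof amounts to a per-family verification.

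For each candidate I would first determine $S'_N$ explicitly by listing the divisors of $N$ and comparing each against $\sqrt{N}$. In the pure prime-power case $N=p^k$ this is routine and yields $S'_N=\{p,p^2,\ldots,p^{\lfloor(k-1)/2\rfloor}\}$. In the mixed-prime families (such as $N=p^2q$, $N=p^3q$, $N=p^2q^2$, and $N=pqr$) the ordering of divisors relative to $\sqrt{N}$ splits into subcases according to how the larger primes compare to powers of the smallest prime---for instance $q<p^2$ versus $q>p^2$ for $N=p^2q$ and $N=p^2q^2$, and the three sub-ranges $p<q<p^2$, $p^2<q<p^3$, $p^3<q$ for $N=p^3q$. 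All forms giving $|S'_N|\le 2$ (namely $N=pq$, $N=pq^2$, $N=p^2q$ in either ordering, and $N=p^k$ for $k\le 6$) are vacuously small recurrent and need no further check.

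The remaining families have $|S'_N|=3$, and small recurrence reduces to finding integers $(a,b)$ with $d_4=ad_3+bd_2$. When $\gcd(d_2,d_3)=1$, B\'ezout guarantees solvability, so the recurrence condition is automatic; this dispatches $N=pq^3$ with $S'_N=\{p,q,pq\}$ (satisfying $U(p,q,0,q)$), the triple $(p,q,p^2)$ arising in $N=p^3q$ and $N=p^2q^2$ both with $p<q<p^2$ (satisfying $U(p,q,0,p)$), the triple $(p,q,r)$ for $N=pqr$ with $r<pq$, and the triple $(p,q,pq)$ for $N=pqr$ with $r>pq$ (satisfying $U(p,q,0,q)$). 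When $d_3=p^2$ the recurrence $d_4=ad_3+bd_2=p(ap+b)$ forces $p\mid d_4$, which fails exactly in the sub-cases $d_4=q$ with $p^2<q$---namely $N=p^3q$ with $p^2<q<p^3$ and $N=p^2q^2$ with $p^2<q$---so these contribute no small recurrent numbers. The triple $(p,p^2,p^3)$, appearing for $N=p^k$ with $k\in\{7,8\}$ and for $N=p^3q$ with $p^3<q$, satisfies $U(p,p^2,p,0)$.

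The main obstacle is purely bookkeeping: ensuring that the subcases based on divisor orderings are exhaustive and that every item in the statement is recovered without duplication. Each subcase is settled in one or two lines, so after organizing the enumeration the proof follows by direct case-by-case verification.
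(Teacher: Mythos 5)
Your proposal is correct and follows essentially the same route as the paper: reduce via \eqref{e5} to $\tau(N)\le 9$, invoke the finite classification from Section \ref{pre}, and verify each family by comparing divisors to $\sqrt{N}$ and checking solvability of $d_4 = ad_3 + bd_2$ (with the $p\mid d_4$ obstruction eliminating the $p^2 < q$ subcases). Your write-up is in fact more detailed than the paper's, which states the reduction and leaves the per-family verification implicit.
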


Combining Propositions \ref{p21} and \ref{p22}, we obtain Theorem \ref{m1}.

\section{Large recurrent numbers}\label{large}

Now we characterize all positive integers $N$ whose $L'_N$ satisfies a linear recurrence of order at most two. By a simple observation, instead of working directly with divisors in $L'_N$, we work with divisors in $S'_N$. Again, the set of divisors of a positive integer $N$ is $1 = d_1 < d_2 <\cdots < d_{\tau(N)}$ and the set $S'_N = \{d_2, d_3, \ldots\}$.

\subsection{The case $|L_N'|\ge 4$}

Note that $|L_N'|\ge 4$ is equivalent to $|S_N'|\ge 4$.

\begin{lem}\label{p222}
    For any $d\in L'_{N}$, we have $N/d\in S'_{N}$. If $N$ is large recurrent with $|L_N'|\ge 4$, then \begin{equation}\label{ej1} ad_{i+2}+bd_{i+1}\ =\ \frac{d_{i+1}d_{i+2}}{d_i}, \forall d_i, d_{i+1}, d_{i+2}\in S'_{N}.\end{equation} In particular, we have 
    \begin{equation}\label{ej2} ad_4+bd_3 \ =\ \frac{d_3d_4}{d_2}.\end{equation}
    \end{lem}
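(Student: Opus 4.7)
The plan is to exploit the fundamental duality between $S'_N$ and $L'_N$ given by $d\mapsto N/d$, and then push the recurrence satisfied by $L'_N$ across this duality to obtain a relation on $S'_N$.

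The first assertion is a direct check: if $d\in L'_N$, then $\sqrt{N}<d<N$ and $d\mid N$, so $1<N/d<\sqrt{N}$ with $N/d\mid N$, whence $N/d\in S'_N$. In particular, the map $\phi\colon L'_N\to S'_N$ given by $\phi(d)=N/d$ is a bijection that reverses the natural ordering.

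For the second assertion, I would label $S'_N=\{d_2<d_3<\cdots<d_{m+1}\}$ and $L'_N=\{\ell_1<\ell_2<\cdots<\ell_m\}$, where $m=|L'_N|=|S'_N|\ge 4$ (equality of cardinalities follows from $\phi$ being a bijection). Since $\phi$ is order-reversing, the $j$-th smallest element of $L'_N$ is the image of the $j$-th largest element of $S'_N$, giving $\ell_j=N/d_{m+2-j}$ for every $1\le j\le m$. The hypothesis that $L'_N$ satisfies $U(u,v,a,b)$ says $\ell_{j+2}=a\ell_{j+1}+b\ell_j$ for $1\le j\le m-2$, which upon substituting the expression for $\ell_j$ becomes
\[
\frac{N}{d_{m-j}}\ =\ a\cdot\frac{N}{d_{m+1-j}}\ +\ b\cdot\frac{N}{d_{m+2-j}}.
\]
I then plan to multiply through by $d_{m-j}d_{m+1-j}d_{m+2-j}/N$ and divide by $d_{m-j}$, then reindex via $i:=m-j$. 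The range $1\le j\le m-2$ translates to $2\le i\le m-1$, which is precisely the set of $i$ for which all three of $d_i,d_{i+1},d_{i+2}$ lie in $S'_N$. The resulting identity is \eqref{ej1}, and setting $i=2$ recovers \eqref{ej2}.

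There is no substantial obstacle here; the proof is a clean algebraic transport under the involution $d\mapsto N/d$. The only delicate point is bookkeeping with the indices under the order-reversing bijection, which is why I prefer to introduce the explicit parameter $m$ and the substitution $i=m-j$ rather than attempting the translation in one step.
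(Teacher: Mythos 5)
Your proposal is correct and follows essentially the same route as the paper: both proofs transport the order-two recurrence on $L'_N$ through the order-reversing bijection $d\mapsto N/d$ and clear denominators to land on \eqref{ej1}. Your version is just more explicit about the index bookkeeping (introducing $m$ and the substitution $i=m-j$), where the paper compresses this into the shorthand $d'_i:=d_{\tau(N)+1-i}=N/d_i$.
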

    \begin{proof}
    If $d\in L'_{N}$, then $\sqrt{N} < d < N$. Then $1 < N/d < \sqrt{N}$, so $N/d\in S'_{N}$. 
    Let 
    $$d'_i \ :=\ d_{\tau(n)+1-i}\ =\ \frac{N}{d_i}\in L'_N, \forall d_i\in S'_N.$$ 
    If $N$ is large recurrent, then we have 
    $$d'_{i}\ =\ ad_{i+1}^{'}+bd_{i+2}^{'}, \forall d'_i, d'_{i+1}, d'_{i+2}\in L^{'}_N.$$
    Therefore,
    $$\frac{N}{d_i}\ =\ a\frac{N}{d_{i+1}}+b\frac{N}{d_{i+2}},\forall d_i, d_{i+1}, d_{i+2}\in S'_N,$$
    which gives
    $$ad_{i+2}+bd_{i+1}\ =\ \frac{d_{i+1}d_{i+2}}{d_i}, \forall d_i, d_{i+1}, d_{i+2}\in S'_N.$$
    This completes our proof.
    \end{proof}

Since $d_2$ is a prime number $p$ and $d_3$ is either $p^2$ or a  prime number $q>p$, we consider two cases.
\subsubsection{When $d_3=p^2$}
Then $d_4$ is either $p^3$ or a prime number $q>p^2$. 

\begin{enumerate}

\item[a)] 
 If $d_4=p^3$, then \eqref{ej2} implies that $p^2 = ap + b$.
 \begin{claim}
If $p\neq a$, then $S'_N=\{p,p^2,\ldots,p^k\}$ for some $k\ge 4$.
 \end{claim}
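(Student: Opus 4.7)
The plan is to use \eqref{ej2} to pin down $b$ in terms of $a$ and $p$, and then apply \eqref{ej1} inductively to force every element of $S'_N$ to be a power of $p$. Substituting $d_2 = p$, $d_3 = p^2$, $d_4 = p^3$ into \eqref{ej2} gives $ap^3 + bp^2 = p^4$, i.e. $b = p(p-a)$. Note that the hypothesis $p \neq a$ is equivalent to $b \neq 0$.

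Next, I would argue by induction that if $d_2, d_3, \ldots, d_i \in S'_N$ are equal to $p, p^2, \ldots, p^{i-1}$ and $d_{i+1} \in S'_N$, then $d_{i+1} = p^i$. The base case $i = 4$ (so $d_5$) follows by applying \eqref{ej1} with the triple $(d_3, d_4, d_5)$: the equation $ad_5 + bp^3 = p^3 d_5 / p^2$ simplifies to $(p-a)d_5 = bp^3 = p(p-a)p^3$, and cancelling the nonzero factor $p - a$ gives $d_5 = p^4$. The inductive step is identical: applying \eqref{ej1} to the triple $(d_{i-1}, d_i, d_{i+1}) = (p^{i-2}, p^{i-1}, d_{i+1})$ yields $ad_{i+1} + bp^{i-1} = pd_{i+1}$, and the assumption $p \neq a$ again lets us solve $d_{i+1} = p^i$.

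Thus every element of $S'_N$ is a power of $p$, so $S'_N = \{p, p^2, \ldots, p^k\}$ for some $k$, and the bound $k \geq 4$ comes from the standing assumption $|S'_N| = |L'_N| \geq 4$. There is no real obstacle here: the argument is just a one-line induction once \eqref{ej2} is used to compute $b$, with the hypothesis $p \neq a$ playing the essential role of allowing us to divide by $p - a$ and uniquely determine each successive divisor.
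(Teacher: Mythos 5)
Your proof is correct and follows essentially the same route as the paper: use \eqref{ej2} to get $b = p(p-a)$, then apply \eqref{ej1} to consecutive triples and cancel the nonzero factor $p-a$ to force $d_{i+1}=p^i$ by induction. The paper phrases the cancellation as $(p-a)(d_{j+1}-p^j)=0$, but the content is identical.
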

 \begin{proof}
We need to show that if $d_i\in S'_N$, then $d_i = p^{i-1}$.  Base case: the claim holds for $i\le 4$. Suppose that there exists a $j\ge 4$ such that $d_{i} = p^{i-1}$ for all $i\le j$. Using \eqref{ej1}, we have
 $$ad_{j+1} + bp^{j-1} \ =\ ad_{j+1} + bd_j \ =\ \frac{d_{j+1}d_j}{d_{j-1}}\ =\ \frac{d_{j+1} p^{j-1}}{p^{j-2}}\ =\ pd_{j+1},$$
 which, combined with $p^2 = ap + b$, gives 
 $$(p-a)(d_{j+1}-p^j) \ =\ 0.$$
 Since $p\neq a$, we obtain $d_{j+1} = p^j$, as desired. 
 \end{proof}
 By Proposition \ref{p1}, we know that when $p \neq a$, \underline{either $N = p^k$ or $N = p^kq$ for} \underline{some $k\ge 1$ and some prime $q > p^k$.}

Now suppose that $p = a$.  Then $b = 0$. We can write elements in $L'_N$ as $\{g_1, g_2, pg_2, p^2g_2, \ldots, p^kg_2\}$ for some $k\ge 2$. Correspondingly, the set $S'_N$ is $\{p, p^2, \ldots, p^{k}, p^{k+1}, p^{k+1}g_2/g_1\}$. If $p^{k+1}g_2/g_1$ is a power of $p$, then we have the same conclusion about $N$ as when $p\neq a$. If $p^{k+1}g_2/g_1$ is not a power of $p$, then 
$$\frac{p^{k+1}g_2}{g_1} \ =\ q, \mbox{ for some prime }q > p\ \Longrightarrow\ g_1 \ =\ p^{k+1}\frac{g_2}{q}.$$
Note that $g_2/q\in \mathbb{N}$. Furthermore, we claim that $g_2/q = p$. Indeed, since $1< g_2/q < g_1$, we know that $g_2/q\in S'_N$. If $g_2/q = q$, then 
$$pq \ <\ p^{k+1}\frac{g_2}{q}\ =\ g_1,$$
which implies that $pq\in S'_N$, a contradiction. If $g_2/q = p^j$ for some $j > 1$, then 
$$p^{k+2}\ <\ p^{k+1+j}\ =\ p^{k+1}\frac{g_2}{q} \ =\ g_1,$$
which implies that $p^{k+2}\in S'_N$, another contradiction. 
Therefore, $g_2/q = p$, and we obtain $g_1 = p^{k+2}$ and $g_2 = pq$. We conclude that \underline{$N = p^{k+2}q$ for some $k\ge 2$} \underline{and $p^{k+1} < q < p^{k+2}$.}

\item [b)] If $d_4=q$, we claim that $a \neq p$. Suppose otherwise. Applying \eqref{ej2} to $d_2, d_3$, and $d_4$ gives $aq + bp^2 = pq$. Hence, $a = p$ implies that $b = 0$. However, applying \eqref{ej1} to $d_3, d_4$, and $d_5$ gives 
$(p^3-q)d_5 = 0$, a contradiction. Therefore, $a\neq p$.
By \eqref{ej2}, we have 
$$d_4\ =\ \frac{bp^2}{p-a}\ =\ q \ \Longrightarrow\ q|b\ \Longrightarrow\ b = kq\mbox{ for some }k\in \mathbb{Z}\backslash\{0\}.$$
Hence, $a=p-kp^2$. By \eqref{ej1} applied to $d_3, d_4$, and $d_5$, 
\begin{equation}\label{eq1}d_5\ =\ \frac{kp^2q^2}{q-p^3+kp^4},\end{equation}
which implies that $p^2|d_5$ since $\gcd(p^2,q-p^3+kp^4)=1$. Hence, $d_5=p^3$, and $\eqref{eq1}$ gives $k={p(p^3-q)\over p^5-q^2}$. 

\begin{itemize}
\item[] Case b.i) If $S'_N$ has exactly four elements, which are $p,p^2,q,p^3$, then $\tau(N) = 10$, which implies that $N = p^4q$. Hence, $L_N' = \{pq,p^4,p^2q,p^3q\}$ with $a=pq{p^2-q\over p^5-q^2}$ and $b=pq{p^3-q\over p^5-q^2}$. We conclude that \underline{$N=p^4q$ with $p^2 < q < p^3$, } \underline{$(p^5-q^2)|(p^2-q)$, and $(p^5-q^2)|(p^3-q)$}.

\item[] Case b.ii) If $|S'_N| > 4$, then \eqref{ej1} gives $$d_6\ =\ {bp^3q\over p^3-aq}\ \Longrightarrow\  q|d_6 \ \Longrightarrow\ d_6 \ =\ pq.$$
However, since $(a, b) = (p(1-kp), kq)$, we have
$$pq \ =\ d_6\ =\ {bp^3q\over p^3-aq}\ =\ {kp^2q^2\over p^2-q(1-kp)},$$ which gives $p^2 = q$, a contradiction.
\end{itemize}
\end{enumerate}

We summarize our result when $d_3 = p^2$. 

\begin{prop}\label{p23} A number $N$ is large recurrent with $|L'_N|\ge 4$ and $(d_2, d_3) = (p, p^2)$ for some prime $p$ if and only if $N$ belongs to one of the following forms.
\begin{enumerate}
    \item $N=p^k$ for some $k\ge 9$. In this case, $L'_N=\{p^{\lceil(k-1)/2\rceil+1}, p^{\lceil (k-1)/2\rceil+2},\ldots,p^{k-1}\}$ satisfies $U(p^{\lceil (k-1)/2\rceil+1},p^{\lceil (k-1)/2\rceil +2},p,0)$.
    \item $N=p^kq$ for some $k\ge 4$ and $q>p^k$. In this case, $L'_N=\{q,pq,p^2q,\ldots,p^{k-1}q\}$ satisfies $U(q,pq,p,0)$.
    \item $N=p^kq$ for some $k\ge 4$ and $p^{k-1} < q < p^k$. Then $$L'_N \ =\ \{p^k, pq, p^2q, \ldots, p^{k-1}q\}$$
    satisfies $U(p^k, pq, p, 0)$.
    \item $N=p^4q$ with $p^2 < q < p^3$, $(p^5-q^2)|(p^2-q)$, and $(p^5-q^2)|(p^3-q)$. In this case, 
    $L'_N \ =\ \{pq, p^4, p^2q, p^3q\}$.
\end{enumerate}
\end{prop}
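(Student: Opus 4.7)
The plan is to establish the forward direction by synthesizing the subcase analyses carried out above in (a) and (b), and the backward direction by directly verifying that each of the four listed forms produces an $L'_N$ with the asserted recurrence. For necessity, I first invoke Lemma \ref{p222}, which translates the original recurrence on $L'_N$ into the dual identity \eqref{ej1} on $S'_N$. Since $d_2 = p$ and $d_3 = p^2$, the next divisor $d_4$ is either $p^3$ or a prime $q > p^2$, giving the two subcases that the proposition is structured around.

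In the subcase $d_4 = p^3$, equation \eqref{ej2} applied to $(d_2, d_3, d_4) = (p, p^2, p^3)$ forces $p^2 = ap + b$. If $p \neq a$, the induction already executed shows $d_i = p^{i-1}$ for every $d_i \in S'_N$; Proposition \ref{p1} then restricts $N$ to be $p^k$ or $p^k q$ with $q > p^k$, and the hypothesis $|L'_N| \geq 4$ combined with \eqref{e5} pins down $k \geq 9$ and $k \geq 4$ respectively, yielding forms (1) and (2). If $p = a$ (so $b = 0$), $L'_N$ becomes a geometric progression with ratio $p$ built from two seeds $g_1 < g_2$; dualizing to $S'_N$, the ordering argument given above eliminates every candidate for the bridging quotient $p^{k+1} g_2 / g_1$ except a prime $q$ lying in $(p^{k+1}, p^{k+2})$, forcing $g_2 = pq$ and $g_1 = p^{k+2}$, which is form (3).

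In the subcase $d_4 = q$, equation \eqref{ej2} forces $q \mid b$, hence $b = kq$ and $a = p(1 - kp)$ for some nonzero integer $k$; then \eqref{ej1} applied to $(d_3, d_4, d_5)$ forces $d_5 = p^3$ with $k = p(p^3 - q)/(p^5 - q^2)$. If $|S'_N| = 4$, then $\tau(N) = 10$ yields $N = p^4 q$, and imposing $a, b \in \mathbb{Z}$ is equivalent to the divisibility conditions $(p^5 - q^2) \mid (p^2 - q)$ and $(p^5 - q^2) \mid (p^3 - q)$, producing form (4). If $|S'_N| > 4$, applying \eqref{ej1} to $(d_4, d_5, d_6)$ forces $d_6 = pq$, but substitution of the derived $(a, b)$ then yields $p^2 = q$, contradicting primality of $q$.

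For sufficiency, in each of the four forms I would list $L'_N$ explicitly using the divisor structure of $N$ and verify the stated initial values together with the asserted recurrence coefficients by direct substitution. The main obstacle is the subcase $p = a$ of (a): ruling out every candidate for the bridging divisor $p^{k+1} g_2 / g_1$ beyond a prime lying strictly between $p^{k+1}$ and $p^{k+2}$ requires several careful ordering comparisons to preclude intrusions such as $pq$ or $p^{k+2}$. The rest of sufficiency amounts to bookkeeping with \eqref{e5} and the explicit divisor enumerations.
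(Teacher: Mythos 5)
Your proposal is correct and follows essentially the same route as the paper: the dualization via Lemma \ref{p222}, the split on $d_4\in\{p^3,q\}$, the $a\neq p$ versus $a=p$ dichotomy with Proposition \ref{p1} and the geometric-progression/ordering argument, and the $d_5=p^3$, $\tau(N)=10$ versus $d_6=pq$ contradiction in the second subcase all match the paper's own proof. No substantive differences to report.
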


\subsubsection{When $d_3=q$} 
By \eqref{ej1}, $$p(ad_4+bq)\ =\ d_4q\ \Longrightarrow\ p|d_4.$$
Write $d_4=kp$ for some integer $k$. Since $d_2 = p$ and $d_3 = q$, $d_4$ must be either $p^2$ or $pq$.

\begin{enumerate}

\item[a)] If $d_4=p^2$, \eqref{ej1} gives $ap^2 + bq = pq$. Hence, $q|a$ and $p|b$. Write $a=mq$ and $b=np$ for some integers $m,n$ to get $mp + n = 1$. By \eqref{ej1}, we see that 
$$d_5\ =\ \frac{bp^2q}{p^2-aq}\ \Longrightarrow\ q | d_5\ \Longrightarrow \ d_5 = pq.$$Therefore, $$\frac{bp^2q}{p^2-aq}\ =\ \frac{np^3q}{p^2-mq^2}\ =\ \frac{(1-mp)p^3q}{p^2-mq^2}\ =\ pq\ \Longrightarrow\ m(p^3-q^2)=0,$$
which gives $m = 0$ and so, $(a, b) = (0, p)$. By \eqref{ej1}, $d_{i+2} = pd_{i}$ for all $d_{i}, d_{i+2}\in S'_N$ and $$S'_N\ =\ \{p,q,p^2,pq, \ldots\}.$$ 
If $|S'_N| = 4$, then $\tau(N) = 10$ and \underline{$N = p^4q$ for $p < q < p^2$}. Suppose that $|S'_N| \ge 5$, then $p^3\in S'_N$. 
\begin{itemize}
    \item[] Case a.i) If $q^2|N$, let $k\ge 2$ and $\ell\ge 3$ be the largest power such that $q^k|N$ and $p^k|N$, respectively. Since $q^2\notin S'_N$, we know that $$q^4\ \ge\ N \ \ge\ p^3q^k \ >\ q^{k+3/2}\Longrightarrow\ k\ < \ 5/2.$$
It follows that $k = 2$. That $q^2 < p^2q$ implies that 
$$(p^2q)^2 \ >\  N\ \ge\ p^\ell q^2\ \Longrightarrow\ 3\ \ge\ \ell\ \ge\ 3.$$
Hence, $\ell=3$. If $N$ does not have any other prime divisors besides $p$ and $q$, then \underline{$N = p^3q^2$ for $p < q < p^2$}. If $N$ has a prime divisor $r\neq p, q$, then $r > \sqrt{N}$. So, $r$ must be the unique prime divisor different from $p$ and $q$. We have $N = p^3q^2r$ for $p < q< p^2$ and $r>p^3q^2$. Then $q^2\in S'_N$, a contradiction. 
    \item[] Case a.ii) If $q^2\nmid N$ and $N$ has no prime divisors other than $p$ and $q$, then \underline{$N = p^kq$ some for $k\ge 2$ and $p<q<p^2$}.
    \item[] Case a.iii) If $q^2\nmid N$ and there exists a prime divisor $r$ other than $p$ or $q$, then $r > \sqrt{N}$ and $r$ is the unique prime different from $p$ and $q$. Therefore, $N = p^kqr$ for some $k\ge 2$ and $p<q<p^2<p^kq<r$. Note that the two largest elements in $S'_N$ are $p^{k-1}q$ and $p^kq$. Let $d$ be the third largest divisor in $S'_N$. The relation $d_{i+2} = pd_{i}$ for all $d_{i}, d_{i+2}\in S'_N$ gives that $dp = p^kq$ and so, $d = p^{k-1}q$, which contradicts that $p^{k-1}q$ is the second largest in $S'_N$.
\end{itemize}

\item[b)] If $d_4=pq$, then $p^2\nmid N$ since $p^2<pq$. By \eqref{ej2}, 
\begin{equation}\label{eqd4} ap\ =\ q-b.\end{equation}
We see that $d_5$ is equal to $q^2$ or $r$, for some prime $r > pq$. 

\begin{itemize}
\item[] Case b.i) If $d_5=q^2$, then \eqref{ej1} gives 
\begin{equation}\label{eqd5}bp \ =\  (p-a)q.\end{equation}
From \eqref{eqd4} and \eqref{eqd5}, we obtain $a(p^2-q) = 0$, so $(a, b)= (0, q)$. By \eqref{ej1}, $d_{i+2} = qd_i$ for all $d_{i}, d_{i+2}\in S'_N$. Using \cite[Proposition 5]{C}, we conclude that \underline{$N=pq^k$ or $N=pq^kr$ for some $k\ge 2$ and $p < q < pq^k < r$}.

\item[] Case b.ii) If $d_5=r$, then we claim that $|S'_N| > 4$. If not, $|S'_N| = 4$ implies that $\tau(N) = 10$, which contradicts that $N$ has three distinct prime divisors. By \eqref{ej1}, we see that 
    $$pq(ad_6+br)\ =\ d_6r,$$
    so $pq | d_6$. So, $d_6\in \{pq^2, pqr\}$. If $d_6 = pq^2$, then $q^2 < d_6$, but $q^2$ does not appear before $d_6$ in $S'_N$, a contradiction. If $d_6 = pqr$, then $pr < d_6$, but $pr$ does not appear before $d_6$ in $S'_N$, again a contradiction. 
\end{itemize}
\end{enumerate}

\begin{prop}\label{p24} 
A number $N$ is large recurrent with $|L'_N|\ge 4$ and $(d_2, d_3) = (p, q)$ for some primes $p < q$ if and only if $N$ belongs to one of the following forms.
\begin{enumerate}
    \item $N = p^3q^2$ for $p < q < p^2$. In this case, $L'_N = \{q^2, p^2q, pq^2, p^3q, p^2q^2\}$ satisfies $U(q^2, p^2q, 0, p)$.
    \item $N = p^kq$ some for $k\ge 4$ and $p<q<p^2$.  In this case, 
    $$L'_N \ =\ \begin{cases}
    \{p^{{k/2}+1},p^{{k/2}}q,p^{{k/2}+2},\ldots,p^{k-1}q\} &\mbox{ if }2|k,\\
    \{p^{(k-1)/2}q,p^{{(k+3)/2}},p^{(k+1)/2}q,\ldots,p^{k-1}q\} &\mbox{ if }2\nmid k.
    \end{cases}$$
    Observe that $L'_N$ satisfies $U(p^{{k/2}+1},p^{{k/2}}q,0,p)$ and $U(p^{(k-1)/2}q,p^{{(k+3)/2}},0,p)$ for even and odd $k$, respectively.
    \item $N = pq^k$ for some $k\ge 4$ and $p < q$.  In this case, 
    $$L'_N \ =\ \begin{cases}
   \{pq^{k\over2},q^{{k\over2}+1},\ldots,q^k\} &\mbox{ if }2|k,\\
    \{q^{k+1\over2},pq^{k+1\over2},\ldots,q^k\} &\mbox{ if }2\nmid k.
    \end{cases}$$
    Observe that $L'_N$ satisfies $U(pq^{k/2},q^{{k/2}+1},0,q)$ and $U(q^{(k+1)/2},pq^{(k+1)/2},0,q)$ for even and odd $k$, respectively.
     \item  $N=pq^kr$ for some $k\ge 2$ and $p<q<pq^k<r$. In this case,  $L'_N=\{r,pr,qr,pqr,q^2r,\ldots,q^kr\}$ satisfies $U(r,pr,0,q)$.
\end{enumerate}
\end{prop}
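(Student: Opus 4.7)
The plan is to exploit Lemma \ref{p222}, which converts a linear recurrence on $L'_N$ into the multiplicative-looking identity
$$ad_{i+2} + bd_{i+1} \ =\ \frac{d_{i+1}d_{i+2}}{d_i}$$
on consecutive elements of $S'_N$. Since $d_2 = p$ and $d_3 = q$ are forced, the first step is to determine the possible values of $d_4$. Because $d_4$ divides $N$, lies strictly between $q$ and $N/q$, and is the smallest divisor above $q$, it must be either $p^2$ (when $p^2 < pq$, which holds since $p < q$) or $pq$ (if $p^2 \nmid N$). These are the two branches.

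In the branch $d_4 = p^2$, I would plug $(d_2, d_3, d_4) = (p, q, p^2)$ into \eqref{ej2} to get $ap^2 + bq = pq$, force $q \mid a$ and $p \mid b$, and write $a = mq$, $b = np$ with $mp + n = 1$. Then computing $d_5$ via \eqref{ej1} shows $q \mid d_5$, so $d_5 = pq$; substituting back yields $m(p^3 - q^2) = 0$, hence $m = 0$ and $(a,b) = (0,p)$. The recurrence then collapses to $d_{i+2} = pd_i$, forcing $S'_N = \{p, q, p^2, pq, p^3, p^2q, \ldots\}$. The final step here is a careful bookkeeping of prime-power exponents: if $|S'_N| = 4$ then $\tau(N) = 10$ and $N = p^4q$; if $|S'_N| \geq 5$, I split according to whether $q^2 \mid N$ (leading to $N = p^3q^2$, using the bound $q^4 \geq N \geq p^3q^k$ to pin $k = 2$ and then $\ell = 3$), whether $N$ has a third prime factor (ruled out by producing $q^2$ where it shouldn't appear), and otherwise $N = p^kq$. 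Using Proposition \ref{p2} rules out two large prime factors, and the interleaving pattern of $p$-powers vs.\ $p$-power-times-$q$ in $S'_N$ determines the parity-dependent description of $L'_N$ in cases (1), (2).

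In the branch $d_4 = pq$, I note first that $p^2 \nmid N$. Then \eqref{ej2} gives $ap = q - b$, and the possibilities for $d_5$ are $q^2$ or a prime $r > pq$. If $d_5 = q^2$, applying \eqref{ej1} to $(d_3, d_4, d_5) = (q, pq, q^2)$ yields $bp = (p-a)q$; combined with $ap = q - b$, elimination gives $a(p^2 - q) = 0$, so $a = 0$ and $b = q$. The recurrence $d_{i+2} = qd_i$ then forces $S'_N$ to have the form $\{p, q, pq, q^2, pq^2, \ldots\}$, and appealing to \cite[Proposition 5]{C} (Chentouf's classification) gives cases (3) and (4). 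If instead $d_5 = r$ is a new prime, then $|S'_N| \geq 5$ (else $\tau(N) = 10$, incompatible with three distinct prime divisors), and I compute $d_6$ from \eqref{ej1} to show $pq \mid d_6$, hence $d_6 \in \{pq^2, pqr\}$; either choice contradicts the monotone listing of divisors because $q^2$ or $pr$ would need to appear before $d_6$.

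The main obstacle I anticipate is the $d_4 = p^2$ branch with $|S'_N| \geq 5$: there one must carefully rule out auxiliary prime factors and bound the exponents of $p$ and $q$, exploiting the strong constraint $d_{i+2} = p d_i$ together with Proposition \ref{p2}. In particular, the sub-case $q^2 \mid N$ with an extra prime $r$ requires noting that $p^3 q^2 r$ would force $q^2 \in S'_N$, contradicting the fixed pattern. Once these exclusions are done, translating the resulting $S'_N$ back to $L'_N$ via $d \mapsto N/d$ and checking the parity-dependent initial pair of $L'_N$ is mechanical but must be stated separately for even and odd $k$ to produce the formulas in (2) and (3).
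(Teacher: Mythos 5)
Your overall strategy is the same as the paper's: push the recurrence on $L'_N$ through the bijection $d\mapsto N/d$ via Lemma \ref{p222}, split on $d_4$, solve for $(a,b)$, and then classify $N$ by bounding exponents and excluding extra prime factors. The two main branches ($d_4=p^2$ leading to $(a,b)=(0,p)$ and $d_{i+2}=pd_i$; $d_4=pq$ leading to $(a,b)=(0,q)$ and $d_{i+2}=qd_i$, plus the $d_5=r$ dead end via $pq\mid d_6$), and all the subsequent exponent bookkeeping, match the paper's argument step for step.

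There is, however, one genuine gap at the very first step: you assert that $d_4$, being the smallest divisor of $N$ exceeding $q$, ``must be either $p^2$ or $pq$.'' That is false on purely order-theoretic grounds: $d_4$ could be a third prime $r$ with $q<r<\min(p^2,pq)$ (for instance $p=2$, $q=3$, $r=5$ with $4\nmid N$, where the divisors begin $1<2<3<5$). Your enumeration silently drops this case, and since you do allow a new prime to appear as $d_5$ in the $d_4=pq$ branch, the omission at the $d_4$ stage is an oversight rather than a justified restriction. The paper closes this hole by applying \eqref{ej1} to $(d_2,d_3,d_4)=(p,q,d_4)$: the identity $p(ad_4+bq)=qd_4$ forces $p\mid d_4$, which is exactly what excludes a new prime and yields the dichotomy $d_4\in\{p^2,pq\}$. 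The fix is immediate (alternatively, $d_4=r$ makes the right-hand side $qr/p$ of \eqref{ej2} a non-integer), but as written your case analysis is incomplete and the ``only if'' direction does not go through without it.
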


Combining Propositions \ref{p23} and \ref{p24}, we obtain the following.

\begin{prop}\label{p100}
A number $N$ is large recurrent with $|L'_N|\ge 4$ if and only if $N$ belongs to one of the following forms.
\begin{enumerate}
     \item $N=p^k$ for some $k\ge 9$. In this case, $L'_N=\{p^{\lceil(k-1)/2\rceil+1}, p^{\lceil (k-1)/2\rceil+2},\ldots,p^{k-1}\}$ satisfies $U(p^{\lceil (k-1)/2\rceil+1},p^{\lceil (k-1)/2\rceil +2},p,0)$.
    \item $N=p^kq$ for some $k\ge 4$ and $q>p^k$. In this case, $L'_N=\{q,pq,p^2q,\ldots,p^{k-1}q\}$ satisfies $U(q,pq,p,0)$.
    \item $N=p^kq$ for some $k\ge 4$ and $p^{k-1} < q < p^k$. Then $$L'_N \ =\ \{p^k, pq, p^2q, \ldots, p^{k-1}q\}$$
    satisfies $U(p^k, pq, p, 0)$.
     \item $N = p^kq$ some for $k\ge 4$ and $p<q<p^2$.  In this case, 
    $$L'_N \ =\ \begin{cases}
    \{p^{{k/2}+1},p^{{k/2}}q,p^{{k/2}+2},\ldots,p^{k-1}q\} &\mbox{ if }2|k,\\
    \{p^{(k-1)/2}q,p^{{(k+3)/2}},p^{(k+1)/2}q,\ldots,p^{k-1}q\} &\mbox{ if }2\nmid k.
    \end{cases}$$
    Observe that $L'_N$ satisfies $U(p^{{k/2}+1},p^{{k/2}}q,0,p)$ and $U(p^{(k-1)/2}q,p^{{(k+3)/2}},0,p)$ for even and odd $k$, respectively.
    \item $N=p^4q$ with $p^2 < q < p^3$, $(p^5-q^2)|(p^2-q)$, and $(p^5-q^2)|(p^3-q)$. In this case, 
    $L'_N \ =\ \{pq, p^4, p^2q, p^3q\}$.
     \item $N = p^3q^2$ for $p < q < p^2$. In this case, $L'_N = \{q^2, p^2q, pq^2, p^3q, p^2q^2\}$ satisfies $U(q^2, p^2q, 0, p)$.
    \item $N = pq^k$ for some $k\ge 4$ and $p < q$.  In this case, 
    $$L'_N \ =\ \begin{cases}
   \{pq^{k\over2},q^{{k\over2}+1},\ldots,q^k\} &\mbox{ if }2|k,\\
    \{q^{k+1\over2},pq^{k+1\over2},\ldots,q^k\} &\mbox{ if }2\nmid k.
    \end{cases}$$
    Observe that $L'_N$ satisfies $U(pq^{k/2},q^{{k/2}+1},0,q)$ and $U(q^{(k+1)/2},pq^{(k+1)/2},0,q)$ for even and odd $k$, respectively.
     \item  $N=pq^kr$ for some $k\ge 2$ and $p<q<pq^k<r$. In this case,  $L'_N=\{r,pr,qr,pqr,q^2r,\ldots,q^kr\}$ satisfies $U(r,pr,0,q)$.
\end{enumerate}
\end{prop}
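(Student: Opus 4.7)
The proof of Proposition \ref{p100} is a direct combination of Propositions \ref{p23} and \ref{p24}, so the plan is essentially bookkeeping: verify that the two cases exhaust all possibilities and that their conclusions together produce exactly the eight listed forms. First, I would observe that since $N > 1$, the smallest nontrivial divisor $d_2$ of $N$ must be a prime $p$. Then $d_3$ is the next divisor of $N$ after $p$, and an elementary argument shows that $d_3$ is either $p^2$ or a prime $q>p$: indeed, any divisor of $N$ strictly between $p$ and $p^2$ would have to be a prime, and any divisor of $N$ strictly between $p$ and some prime $q>p$ (when $q$ is the second smallest prime factor of $N$) would have to be a power of $p$, forcing it to be $p^2$.

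Once the dichotomy on $d_3$ is established, I would appeal to Proposition \ref{p23} to handle the subcase $(d_2,d_3)=(p,p^2)$, which produces forms (1)--(4) of Proposition \ref{p100} (these are the same as forms (1)--(4) of Proposition \ref{p23}). Then I would appeal to Proposition \ref{p24} to handle the subcase $(d_2,d_3)=(p,q)$ for primes $p<q$, which produces forms (6)--(9) of Proposition \ref{p100} (rearranged from the (1)--(4) of Proposition \ref{p24}, with form (5) inherited from Proposition \ref{p23} inserted in between). Because the two propositions each provide necessary and sufficient conditions within their own subcase, and because the two subcases are mutually exclusive and collectively exhaustive, the resulting list in Proposition \ref{p100} is itself a complete characterization.

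The only point requiring any care is the order and labeling of the forms: Proposition \ref{p23} produces items (1)--(4) above (corresponding to $N=p^k$, $N=p^kq$ with $q>p^k$, $N=p^kq$ with $p^{k-1}<q<p^k$, and the exceptional $N=p^4q$ with $p^2<q<p^3$), while Proposition \ref{p24} produces items (4), (6), (7), (8). I would simply remark that merging the two lists and relabeling gives the eight items (1)--(8) stated in Proposition \ref{p100}. Since both constituent propositions have been proved in biconditional form, no further verification of sufficiency is needed.

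The main obstacle here is essentially nonexistent, as the heavy lifting has already been done in Propositions \ref{p23} and \ref{p24}; the only subtle point is to justify rigorously the dichotomy $d_3\in\{p^2,q\}$, but this follows at once from the definition of $d_3$ as the second smallest divisor exceeding $1$. Thus the proof amounts to a one-paragraph assembly argument.
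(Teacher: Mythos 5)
Your proposal matches the paper exactly: the paper's entire proof of Proposition \ref{p100} is the single line ``Combining Propositions \ref{p23} and \ref{p24}, we obtain the following,'' relying on the dichotomy $d_3\in\{p^2,q\}$ already set up at the start of the section. Your bookkeeping of which items come from which proposition is correct (modulo the momentary slip ``(6)--(9)'' that you immediately correct to (4), (6), (7), (8)), so nothing further is needed.
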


\subsection{The case $|L'_N|\le 3$}
If $|L'_N|\le 3$, then $\tau(N)\le 9$. We use the classifications of those $N$ from the introduction to obtain the following proposition
\begin{prop}\label{p220}
Let $p, q, r$ denote prime numbers and $k$ be some positive integer. 
A positive integer $N > 1$ is small recurrent with $|L'_N|\le 3$ if and only if $N$ belongs to one of the following forms.
\begin{enumerate}
    \item $N = p^k$ for some $k\le 8$. In this case, $L'_N = \{p^{\lceil (k-1)/2\rceil+1}, \ldots, p^{k-1}\}$ satisfies $U(p^{\lceil (k-1)/2\rceil+1}, p^{\lceil (k-1)/2\rceil+2}, p, 0)$.
    \item $N = pq$ for some $p < q$. In this case, $L'_N = \{q\}$.
    \item $N = pq^2$ for some $p < q$. In this case, $L'_N = \{pq , q^2\}$.
    \item $N = p^2q$ for some $p < q$. If $q < p^2$, then $L'_N = \{p^2, pq\}$. If $q > p^2$, then $L'_N = \{q, pq\}$.
    \item $N = pq^3$ for some $p < q$. In this case, $L'_N = \{q^2, pq^2, q^3\}$.
    \item $N = p^3q$ for some $p < q$. If $p < q < p^2$, then $L'_N = \{pq, p^3, p^2q\}$. If $p^3 < q$, then $L'_N = \{q, pq, p^2q\}$. If $p^2 < q < p^3$, then $L'_N = \{p^3, pq, p^2q\}$. 
    \item $N = p^2 q^2$ for some $p < q$. If $p < q < p^2$, then $L'_N = \{q^2, p^2q, pq^2\}$. The case $p^2 < q$ is impossible as it gives $L'_N = \{p^2q, pq^2, q^2\}$ and there is no integral solution $(a, b)$ to $apq^2 + bp^2q = q^2$.
    \item $N = pqr$ for some $p < q < r$. If $r > pq$, then $L'_N = \{r, pr, qr\}$. The case $r < pq$ is impossible as it gives $L'_N = \{pq, pr, qr\}$ and there is no integral solution $(a, b)$ to $apr + bpq = qr$. 
\end{enumerate}
\end{prop}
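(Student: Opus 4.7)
The plan is to reduce to a finite enumeration and then dispose of each case by direct computation. By identity \eqref{e5}, the constraint $|L'_N|\le 3$ is equivalent to $\tau(N)\in\{2,3,\ldots,9\}$, so $N$ must have one of the shapes enumerated in items (i)--(iv) at the start of Section \ref{pre}. For each such shape I would write the full list of divisors of $N$ in increasing order, identify $\sqrt{N}$ and remove it from the list when $N$ is a perfect square, and then strip off $1$ and $N$ to read off $L'_N$. This mechanical procedure directly produces the displayed $L'_N$ in each item of the statement.

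Since $|L'_N|\le 3$, the linear-recurrence condition is vacuous whenever $|L'_N|\le 2$, which handles $N=pq,\,pq^2,\,p^2q$ and the small prime powers of $p$ at once. When $|L'_N|=3$, writing $L'_N=\{e_1<e_2<e_3\}$, the existence of integers $a,b$ with $e_3=ae_2+be_1$ is equivalent, by B\'ezout, to $\gcd(e_1,e_2)\mid e_3$. In every triple arising here the three divisors are monomials in the two or three primes of $N$, so this divisibility test reduces to a short gcd calculation. For instance, when $N=p^3q$ with $p<q<p^2$, so that $L'_N=\{pq,p^3,p^2q\}$, one has $\gcd(pq,p^3)=p\mid p^2q$ and $(a,b)=(0,p)$ works, giving the recurrence $U(pq,p^3,0,p)$.

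The main obstacle, and the source of the sub-cases in the statement, is that the increasing order of $L'_N$ depends on how the prime powers interleave. For $N=p^3q$ the position of $q$ relative to $p^2$ and $p^3$ produces three distinct orderings of the top three proper divisors, each requiring its own check; for $N=p^2q^2$ the comparison of $q$ with $p^2$ gives two orderings; for $N=pqr$ the comparison of $r$ with $pq$ splits the analysis similarly. In each split I expect to handle one ordering by exhibiting an explicit recurrence, and to rule out the other by producing a divisibility obstruction to $e_3=ae_2+be_1$ of the type ``$\gcd(e_1,e_2)\nmid e_3$.'' Once the correct ordering is pinned down in every sub-case and the corresponding gcd test is executed, collating the outcomes yields precisely the classification of Proposition \ref{p220}.
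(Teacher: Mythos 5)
Your overall strategy is exactly the paper's: use \eqref{e5} to translate $|L'_N|\le 3$ into $\tau(N)\le 9$, run through the finitely many shapes (i)--(iv) from Section \ref{pre}, list the divisors, and test the single relation $e_3=ae_2+be_1$ when $|L'_N|=3$ (the cases $|L'_N|\le 2$ being vacuous by definition). Your B\'ezout reformulation --- solvability of $e_3=ae_2+be_1$ in integers is equivalent to $\gcd(e_1,e_2)\mid e_3$ --- is a correct and clean way to organize the checks, and it handles items such as $N=pqr$ with $r<pq$ exactly as intended: there $L'_N=\{pq<pr<qr\}$ and $\gcd(pr,pq)=p\nmid qr$.

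The problem is your final step, where you assert that collating the outcomes ``yields precisely the classification of Proposition \ref{p220}'' without actually executing the test on the excluded sub-cases. If you do execute it, item 7 fails to match. For $N=p^2q^2$ with $p^2<q$, the correct increasing order of $L'_N$ is $p^2q<q^2<pq^2$ (the order $\{p^2q,pq^2,q^2\}$ displayed in the statement is impossible, since $q^2<pq^2$ always), and your own criterion gives $\gcd(p^2q,q^2)=q\mid pq^2$; indeed $pq^2=p\cdot q^2+0\cdot p^2q$, so this $N$ \emph{is} large recurrent, satisfying $U(p^2q,q^2,p,0)$. Concretely, $N=100$ has $L'_{100}=\{20,25,50\}$ and $50=2\cdot 25+0\cdot 20$. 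So a faithful execution of your method proves a classification that differs from the one you set out to prove in this sub-case, and you cannot paper over that by assuming agreement: you must either locate an error in your computation (there is none) or flag the discrepancy with the stated proposition. As written, your proof attempt silently endorses an exclusion that your own test refutes, which is a genuine gap in the argument even though the underlying method is sound.
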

Combining Propositions \ref{p100} and \ref{p220}, we obtain Theorem \ref{m2}.


\section{Appendix}

\begin{proof}[Proof of Lemma \ref{l4}]
i) Since $r = aq + bp$ and $pq = ar + bq$, we know that $\gcd(a, b)|r$ amd $\gcd(a, b)|pq$, respectively. Hence, $\gcd(a, b) = 1$.

Since $r= aq + bp$ and $r$ is a prime, $p\nmid a$.

ii) Suppose that $k = \gcd(b, d_i) > 1$ for some $d_i\in S'_N$. If $d_i = d_2$, then $p|b$. Since $pq = ar + bq$, we get $p | a$, which contradicts $\gcd(a, b) = 1$. If $d_i = d_3$, then $q|b$. Since $r = aq + bp$, we get $q | r$, a contradiction. If $d_i > d_3$, then write 
$$\gcd(b, d_i)\ =\ \gcd(b, ad_{i-1} + bd_{i-2})\ \stackrel{\mbox{i)}}{=}\ \gcd(b, d_{i-1}),$$
which, by induction, gives $1 < \gcd(b, d_i) = \gcd(b, d_3)$, which has been shown to be impossible. 

iii) The claim holds for $i\le 4$. Let $d_i, d_{i+1}\in S'_N$ for some $i\ge 5$. We have
$$\gcd(d_i, d_{i+1})\ =\ \gcd(d_i, ad_i + bd_{i-1})\ \stackrel{\mbox{ii)}}{=}\ \gcd(d_i, d_{i-1}).$$
By induction, we obtain $\gcd(d_i, d_{i+1}) = 1$.

iv) The claim holds for $i\le 5$. Assume that it holds for all $i\le j$ for some $j\ge 5$. We show that it holds for $i = j+1$. We have
$$pq \ =\ ar + bq\ =\ a(aq + bp) + bq\ =\ (a^2+b)q + abp.$$
Hence, $p|(a^2+b)$. Write
$$d_{j+1} \ =\ ad_j + bd_{j-1}\ =\ a(ad_{j-1} + bd_{j-2}) + bd_{j-1}\ =\ (a^2+b)d_{j-1} + abd_{j-2}.$$
Since $p|(a^2+b)$ and $\gcd(p, ab) = 1$, we know that $p|d_{j+1}$ if and only if $p|d_{j-2}$. By the inductive hypothesis, $p|d_{j-2}$ if and only if $j-2\equiv 2\Mod 3$, or equivalently, $j+1\equiv 2\Mod 3$. By induction, we have the desired conclusion.  

v) The claim holds for $i\le 5$. Assume that it holds for all $i\le j$ for some $j\ge 5$. We show that it holds for $i = j+1$. That $pq = ar+bq$ implies that $q|a$. Write 
$$d_{j+1} \ =\ ad_j + bd_{j-1}.$$
By ii), $q|d_{j+1}$ if and only if $q|d_{j-1}$. By the inductive hypothesis, $q|d_{j-1}$ if and only if $j+1\equiv 1\Mod 2$. This completes our proof.
\end{proof}

\begin{proof}[Proof of Lemma \ref{l6}]
i) Same as the proof of Lemma \ref{l4} item i).

ii) Suppose, for a contradiction, that $\gcd(b, d_i) > 1$ for some $i\ge 3$. If $i = 3$, then $q|b$. We have $r = aq + bp$. Since $q|b$, we get $q|r$, a contradiction. If $i\ge 4$, write 
$$\gcd(b, d_i) \ =\ \gcd(b, ad_{i-1} + bd_{i-2})\ =\ \gcd(b, d_{i-1}).$$
By induction, $1 < \gcd(b, d_i) = \gcd(b, d_3)$, which has been shown to be impossible. 

iii) The claim holds for $i\le 4$. Pick $i\ge 5$. We have 
$$\gcd(d_i, d_{i+1}) \ =\ \gcd(d_i, ad_i + bd_{i-1})\ =\ \gcd(d_i, bd_{i-1})\ \stackrel{\mbox{ii)}}{=}\ \gcd(d_i, d_{i-1}).$$
By induction, we obtain $\gcd(d_i, d_{i+1}) = \gcd(d_4, d_5) = 1$. 

iv) Assume that $\gcd(a, d_i) > 1$ for some $i\ge 2$. If $i = 2$, then $p|a$, which contradicts the primality of $r$ and the linear recurrence $r = aq + bp$. If $i = 3$, then $q|b$, which contradicts the primality of $s$ and the linear recurrence $s = ar + bq$. Assume that $i\ge 4$. Write 
$$\gcd(a, d_i) \ =\ \gcd(a, ad_{i-1} + bd_{i-2})\ \stackrel{\mbox{i)}}{=}\ \gcd(a, d_{i-2}).$$
By induction, either $1 < \gcd(a, d_i) = \gcd(a, d_2)$ or $1 < \gcd(a, d_i) = \gcd(a, d_3)$, neither of which is possible. 

v) The claims holds for $i\le 3$. Pick $i\ge 4$ and suppose that 
$k = \gcd(d_i, d_{i+2}) > 1$. Since $d_{i+2} = ad_{i+1} + bd_{i}$, $k$ divides $ad_{i+1}$. By iii), $\gcd(k, d_{i+1}) = 1$, so $k|a$. However, $\gcd(a, d_i) > 1$ contradicts iv). 
\end{proof}


\ \\
\end{document}